\numberwithin{equation}{section}
\numberwithin{figure}{section}
\theoremstyle{plain}
\newtheorem{thm}{\protect\theoremname}
\theoremstyle{plain}
\newtheorem{lem}[thm]{\protect\lemmaname}
\theoremstyle{plain}
\newtheorem{cor}[thm]{\protect\corollaryname}
\providecommand{\corollaryname}{Corollary}
\providecommand{\lemmaname}{Lemma}
\providecommand{\theoremname}{Theorem}
\begin{document}
\title{Coercive Inequalities in Higher-Dimensional Anisotropic Heisenberg
Group}
\author{E. Bou Dagher}
\address{Esther Bou Dagher: \newline Department of Mathematics \newline Imperial College London \newline 180 Queen's Gate, London SW7 2AZ \newline United Kingdom}
\email{esther.bou-dagher17@imperial.ac.uk}
\author{B. Zegarli\'{n}ski}
\address{Bogus\l{}aw Zegarli\'{n}ski: \newline Department of Mathematics \newline Imperial College London \newline 180 Queen's Gate, London SW7 2AZ \newline United Kingdom}
\email{b.zegarlinski@imperial.ac.uk}

\providecommand{\keywords}[1]{\textbf{\textit{Keywords---}} #1}

\begin{abstract}
In the setting of higher-dimensional anisotropic Heisenberg group,
we compute the fundamental solution for the sub-Laplacian, and we
prove Poincaré and $\beta-$Logarithmic Sobolev inequalities for measures
as a function of this fundamental solution. 

\tableofcontents{}

\end{abstract}

\keywords{Poincaré inequality, Logarithmic-Sobolev inequality, Anisotropic Heisenberg group, sub-gradient, fundamental solution, probability measures}

\maketitle

\section{Introduction}

In 1975, G.B. Folland showed in \cite{key-7} that on a Carnot group
$\mathbb{G},$ the sub-Laplacian $\triangle:={\displaystyle \sum_{i=1}^{n}X_{i}^{2}}$
admits a unique fundamental solution $N^{2-Q},$ i.e.
\[
\triangle N^{2-Q}=\delta,
\]
where $\delta$ is the delta-distribution at the unit element of $\mathbb{G},$
$X_{1},...,X_{n}$ are the Jacobian generators of $\mathbb{G},$ $Q$
is the homogeneous dimension, and $N$ is a homogeneous norm on $\mathbb{G}$. 

In \cite{key-3}, Z. Balogh and J. Tyson introduced the concept of
polarizable Carnot groups defined by the condition that $N$ is $\infty-$harmonic
in $\mathbb{G\backslash}{\{0\}}$, i.e. for ${\displaystyle \triangledown:=}(X_{i})_{1\leq i\leq n},$
\begin{equation}
\triangle_{\infty}N:=\frac{1}{2}<\triangledown\left(|\triangledown N|^{2}\right),\triangledown N>=0\;\;\;\;\;\;in\;\;\mathbb{G\backslash}\{0\}.\label{eq:1}
\end{equation}
They have shown that using the $\infty-$harmonicity of $N$ one can
provide a procedure to construct polar coordinates of special type
where the curves passing through the points on the unit sphere ${\{N=1\}}$
are horizontal.

Moreover, they showed in \cite{key-3} that the fundamental solution
of the $p-$sub-Laplacian can be expressed as the fundamental solution
$N$ of the sub-Laplacian, proved capacity formulas, and produced
sharp constants for the Moser-Trudinger inequality (which was established
by L. Saloff-Coste in \cite{key-8} in the setting of Carnot groups
but without sharp constants). In settings related to polarizable Carnot
groups, many authors showed Hardy-type inequalities as a function
of $N$ (see \cite{key-10,key-13,key-14,key-15,key-16,key-9}), proved
Rellich-type inequalities (see \cite{key-15,key-17}), and studied
Fuglede's $p-$module of system of measures \cite{key-19}. 

For the time being, there is no a classification of polarizable Carnot
groups, and the only examples till now are Euclidean spaces and Heisenberg-type
groups. In addition, the concept of a polarizable Carnot group is
a delicate one in the sense that under a small pertubation of the
Lie algebra, the group is no longer polarizable. Z. Balogh and J.
Tyson provided in \cite{key-3} the anisotropic Heisenberg group in
$\mathbb{R}^{4}$ as a counterexample with the following generators
of the Lie algebra: $X=\frac{\partial}{\partial x}+2ay\cdot\frac{\partial}{\partial t},$
$Y=\frac{\partial}{\partial y}-2ax\cdot\frac{\partial}{\partial t},$
$Z=\frac{\partial}{\partial z}-2w\cdot\frac{\partial}{\partial t},$
and $W=\frac{\partial}{\partial w}-2z\cdot\frac{\partial}{\partial t},$
where $a=\frac{1}{2}.$ (Note that if $a=1,$ we have the polarizable
Heisenberg group.) To show (\ref{eq:1}) does not hold true for the
anisotropic Heisenberg group, they computed explicitly the fundamental
solution of the sub-Laplacian using R. Beals, B. Gaveau, and P. Greiner's
\cite{key-20} explicit intergal representation for the fundamental
solution in the setting of general step-two Carnot groups.

Recently, T. Bieske \cite{key-21} revisited this counterexample and
proved that under a change of the inner product imposed on the vectors
in the Lie Algebra, which now requires the generators to be orthogonal
instead of orthonormal, the anisotropic Heisenberg group is turned
into a group of Heisenberg-type i.e. it is now polarizable!

The goal of this paper is to study coercive inequalities such as the
$q-$Poincaré inequality and the $\beta-$Logarithmic Sobolev inequality
in the setting of the anisotropic Heisenberg group with respect to
measures as a function of the explicit fundamental solution. We will
first show the computations, that were partially omitted in \cite{key-3}
in the $\mathbb{R}^{5}$ setting, and use that to get an explicit
fundamental solution for higher dimensions.

\begin{onehalfspace}
In the setting of nilpotent Lie groups, heat kernel estimates have
been used to get coercive inequalities \cite{key-22,key-23,key-25,key-26,key-27,key-28,key-29,key-30,key-31,key-32}.
In our setting, we do not use heat kernel estimates; instead, we study
coercive inequalities involving sub-gradients and probability measures
depending on the group. An approach to study such problems was pioneered
in \cite{key-33}. It was later used by J. Inglis to get Poincaré
inequality in the setting of the Heisenberg-type group with measure
as a function of Kaplan distance \cite{key-34}, by M. Chatzakou et
al. to get Poincaré inequality in the setting of the Engel-type group
with a measure as a function of some quasi-homegenous norm \cite{key-35},
and by the authors of this paper in \cite{key-36} to get $q-$Poincaré
inequality and the $\phi-$Logarithmic Sobolev in the setting of step-two
carnot groups with measures as function of a generalised Kaplan norm
\cite{key-36}.
\end{onehalfspace}

The method of \cite{key-33} to get coercive inequalities relies on
the study of so-called U-bounds, i.e. bounds of the following type
\begin{equation}
\int|f|^{q}Ud\mu\leq C\int|\triangledown f|^{q}d\mu+D\int|f|^{q}d\mu,\label{eq:ubound1}
\end{equation}
with functions $U$ possessing suitable growth properties at infinity.
Our key result in this paper is obtaining (in section 3) for a probability
measure $d\mu=\frac{e^{-g(N)}}{Z}d\lambda,$ defined with $g(N)$
satisfying suitable growth conditions, the following U-Bound in the
setting of the higher-dimensional anisotropic Heisenberg group:
\begin{equation}
\int\frac{g'\left(N\right)}{N^{2}}\vert f\vert^{q}d\mu\leq C\int\vert\triangledown f\vert^{q}d\mu+D\int\vert f\vert^{q}d\mu,\label{eq:u}
\end{equation}
where ${\displaystyle \triangledown:=}(X_{i})_{1\leq i\leq n},$ with
constants $C,D\in(0,\infty)$ independent of the function $f$ for
which the right-hand side is well defined. This U-bound is used (in
section 4) to get a q-Poincaré inequality and a $\beta-$Logarithmic
Sobolev inequality for $q\geq2$ . We expect that our results can
be used to extend those coercive inequalities to an infinite dimensional
setting, which is of interest. (See also works: \cite{key-42,key-41,key-40,key-39,key-38,key-37}.)
In the second section, we will start by extending Z. Balogh and J.
Tyson's anisotropic Heisenberg group in $\mathbb{R}^{5}$ \cite{key-3}
to a higher-dimensional anisotropic Heisenberg group in $\mathbb{R}^{2n+1},$
and use R. Beals, B. Gaveau, and P. Greiner's \cite{key-20} explicit
intergal representation to compute the fundamental solution. We also
compute bounds for $|\triangledown N|$ and $x\cdot\triangledown N$
(section 2), which are essential to get the U-Bound (\ref{eq:u})
(section 3). We remark that in our setting, unlike in the case studied
in \cite{key-36}, $x\cdot\triangledown N$ can be negative, and we
will need the dimension $n>5$ to take care of the negative term.
For $n\leq5,$ some other method is yet to be explored to get U-Bounds.
Finally, in the fourth section, we apply the U-Bound to get coercive
inequalities like the q-Poincaré inequality and the $\beta-$Logarithmic
Sobolev inequality for $q\geq2$ . 

\section{The Fundamental Solution}

Consider a generalisation to the anisotropic Heisenberg group $\mathbb{H}_{2n}(\frac{1}{2},1),$
as introduced in \cite{key-3}, on $\mathbb{R}^{2n+1}$ with dilation
$\delta_{\lambda}(x_{1},x_{2},...,x_{2n},t)=(\lambda x_{1},\lambda x_{2},...,\lambda x_{2n},\lambda^{2}t)$
and the composition law
\[
(x_{1},x_{2},...,x_{2n},t)\circ(\eta_{1},\eta_{2},...,\eta_{2n},\tau)
\]
\[
=\left(x_{1}+\eta_{1},x_{2}+\eta_{2},...,x_{2n}+\eta_{2n},t+\tau+\frac{x_{1}\eta_{n+1}}{2}-\frac{x_{n+1}\eta_{1}}{2}+\sum_{j=2}^{n}(x_{j}\eta_{j+n}-\eta_{j}x_{j+n})\right).
\]
$\mathbb{H}_{2n}(\frac{1}{2},1)$ is a homogeneous Carnot group of
step two with generators
\[
X_{j}=\begin{cases}
\partial_{x_{1}}-\frac{x_{n+1}}{2}\partial_{t} & j=1\\
\partial_{x_{n+1}}+\frac{x_{1}}{2}\partial_{t} & j=n+1\\
\partial_{x_{j}}-x_{j+n}\partial_{t} & j=2,3,..,n\\
\partial_{x_{j}}+x_{j-n}\partial_{t} & j=n+2,n+3,..,2n.
\end{cases}
\]

\begin{thm}
The fundamental solution for the group $\mathbb{H}_{2n}(\frac{1}{2},1)$
is given by the following homogeneous norm
\begin{equation}
N(x,t)=\frac{\left(B^{2}+t^{2}\right)^{\frac{1}{4n}}\left(AB+t^{2}+A\sqrt{B^{2}+t^{2}}\right)^{\frac{1}{2}-\frac{1}{4n}}}{\left(B+\sqrt{B^{2}+t^{2}}\right)^{\frac{1}{2}}}\label{eq:9-2}
\end{equation}
where $A={\displaystyle \frac{x_{1}^{2}}{2}}+{\displaystyle \frac{x_{n+1}^{2}}{2}}+{\displaystyle \frac{1}{2}}\sum_{j=1,j\not=n+1}^{2n}x_{j}^{2}$
and $B={\displaystyle \frac{x_{1}^{2}}{4}+\frac{x_{n+1}^{2}}{4}}+{\displaystyle \frac{1}{2}}\sum_{j=1,j\not=n+1}^{2n}x_{j}^{2}.$
\end{thm}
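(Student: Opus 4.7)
The plan is to follow the approach carried out by Balogh and Tyson in \cite{key-3} for the case $n=2$ (i.e.\ in $\mathbb{R}^{5}$), filling in the computations they omitted and adapting them to general $n\geq 2$. The starting point is the explicit integral representation of Beals, Gaveau, and Greiner \cite{key-20} for the fundamental solution of the sub-Laplacian on a step-two Carnot group with one-dimensional centre: the kernel $N^{2-Q}$ (with $Q=2n+2$) is written as a one-parameter integral over the variable dual to $t$, whose integrand is determined by the non-zero eigenvalues of the skew-symmetric bracket matrix $J$ on the horizontal space.

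First I would compute the commutators $[X_i,X_j]$ directly from the generators in order to read off $J$. A short calculation yields $[X_1,X_{n+1}]=\partial_t$ and $[X_j,X_{j+n}]=2\partial_t$ for $j=2,\dots,n$, with all other brackets vanishing. Thus $J$, as a skew-symmetric endomorphism of $\mathbb{R}^{2n}$, has non-zero eigenvalues $\pm i$ of multiplicity one (from the $(x_1,x_{n+1})$-plane) together with $\pm 2i$ of multiplicity $n-1$ (from the remaining $(x_j,x_{j+n})$-planes). The two distinct moduli are the source of the anisotropy, and are precisely what obstructs polarizability as observed in \cite{key-3}.

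Next I would substitute these eigenvalues into the Beals--Gaveau--Greiner integrand. After an orthogonal change of variables in the horizontal space separating the two eigenspaces, the integral takes the form $\int_{\mathbb{R}} \phi(\tau)\, e^{-\Psi(\tau,x,t)}\,d\tau$, in which $\phi$ is a product of one factor involving $\sinh(\tau/2)$ and $n-1$ copies of a factor involving $\sinh(\tau)$, and $\Psi$ is a combination of $\coth(\tau/2)$ and $\coth(\tau)$ times the corresponding block horizontal norms, plus a linear term in $t$. The quantities $A$ and $B$ in the statement will appear as exactly these weighted horizontal norms adapted to the two eigenvalue blocks. The integral can then be evaluated in closed form by deforming the contour into the complex $\tau$-plane and collecting residues at the poles on the imaginary axis.

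The principal obstacle is handling the two different scales in the integrand simultaneously: the poles of $1/\sinh(\tau/2)$ and of $1/\sinh(\tau)^{\,n-1}$ lie on the imaginary axis with different spacings, so the residue calculation must be organised so that the resulting sum collapses to an elementary closed form in the variables $A$, $B$, and $t$. Once this integration is performed, simplification using standard hyperbolic identities yields the formula~(\ref{eq:9-2}), and a final homogeneity check under the dilation $\delta_\lambda$ confirms that $N$ is homogeneous of degree one, so that $N^{2-Q}$ indeed has the correct scaling to serve as the fundamental solution of the sub-Laplacian.
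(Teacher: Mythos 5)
Your setup is on target: you correctly identify the two frequency blocks (the $(x_1,x_{n+1})$-plane versus the remaining $n-1$ planes, with bracket ratio $1{:}2$), and you correctly anticipate that $A$ and $B$ arise as the horizontal norms weighted by the two eigenvalue blocks in the Beals--Gaveau--Greiner representation; the concluding homogeneity check is also fine. However, the central computational step is where your plan has a genuine gap. The BGG formula used here is not of the form $\int\phi(\tau)e^{-\Psi}\,d\tau$ (that is the heat-kernel/Gaveau-type formula, which would still require an integration in time); it is $\int_{\mathbb{R}}V(\tau)\,f(x,t,\tau)^{-(Q/2-1)}\,d\tau$ with $V(\tau)=\frac{\tau^{n}}{2}\,\mathrm{csch}(\tau/2)\,\mathrm{csch}(\tau)^{n-1}$ and $f(x,t,\tau)=\tau\coth(\tau/2)(A-B)+\tau\coth(\tau)(2B-A)-\sqrt{-1}\,t\tau$. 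Evaluating this by ``deforming the contour into the complex $\tau$-plane and collecting residues at the poles on the imaginary axis'' does not work as described: the poles of the integrand are not only those of the $\mathrm{csch}$ factors, since $f(x,t,\tau)^{-n}$ contributes zeros of a transcendental function whose locations are not explicit; the hyperbolic factors are periodic in the imaginary direction, so the integrand does not decay as $|\mathrm{Im}\,\tau|\to\infty$ and the contour cannot simply be closed; and even formally the residue sum would be an infinite series with no evident collapse to the stated closed form.

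The route that actually succeeds (the one in the paper, following Balogh--Tyson) avoids any contour work in the $\tau$-plane. One first takes the real part, rewrites everything in terms of $\coth(\tau/2)$ and $\mathrm{csch}(\tau/2)$ via standard identities, and substitutes $u=\mathrm{csch}(\tau/2)$, which converts the integral into
\[
\mathrm{Re}\int_{0}^{\infty}\frac{u^{2n-2}}{\bigl(Au^{2}+2B-2\sqrt{-1}\,t\sqrt{1+u^{2}}\bigr)^{n}}\,du ,
\]
then pulls out $\frac{\partial^{n-2}}{\partial A^{n-2}}$ to reduce the power in the denominator to $2$, takes the real part to obtain a genuinely rational integrand, splits it as $I_{1}+t\,\partial_{t}I_{1}$, and only at that stage applies the residue theorem --- to a rational function whose denominator is an explicit quartic in the new variable, with exactly two simple poles in the upper half-plane. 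Differentiating the resulting elementary expression $n-2$ times in $A$ and raising to the power $-\frac{1}{2n}$ gives formula~(\ref{eq:9-2}). Without this change of variables and reduction step (or an equivalent device), your proposal does not yet contain a viable method for performing the integration, which is precisely the part Balogh and Tyson left partially implicit and which the theorem requires.
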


\textbf{Remark:} Note that for $n=2$, we get back the homogeneous
norm associated to the fundamental solution as calculated in \cite{key-3}.

For what follows, we denote ${\displaystyle |x|=\left(\sum_{j=1}^{2n}x_{j}^{2}\right)^{\frac{1}{2}}}$to
be the Euclidean norm. The following lemma is crucial in obtaining
a U-Bound in section 3.
\begin{lem}
The homogeneous norm $N$ on $\mathbb{H}_{2n}(\frac{1}{2},1)$ satisfies
\begin{equation}
x\cdot\triangledown N\geq-\frac{|x|^{2}}{4nN},\label{eq:14-1}
\end{equation}
\begin{equation}
|\triangledown N|^{2}\geq\frac{|x|^{2}}{2^{5+\frac{2}{n}}N^{2}},\label{eq:20-2}
\end{equation}
and
\begin{equation}
|\triangledown N|^{2}\leq\frac{(2n+1)^{2}|x|^{2}}{2^{3}n^{2}N^{2}}.\label{eq:24-2}
\end{equation}
\end{lem}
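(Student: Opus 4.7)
The plan is to reduce each of (2.3), (2.4), and (2.5) to a chain-rule calculation of $\triangledown N$, treating $N$ as a function of the three intermediate variables $A$, $B$, and $t$. A direct calculation from the generators gives $X_j A = x_j$ for all $j$; $X_j B = x_j/2$ for $j\in\{1,n+1\}$ and $X_j B = x_j$ otherwise; and the coefficients $X_j t$ are read off the generators directly. Summing over $j$ and using $|x|^2 = 2A$ (immediate from the definitions of $A$ and $B$) together with the symplectic-type cancellations $\sum_j x_j X_j t = \sum_j (X_j A)(X_j t) = \sum_j (X_j B)(X_j t) = 0$ (which follow from the pairing of the ``horizontal'' indices $\{2,\ldots,n\}$ with $\{n+2,\ldots,2n\}$, together with the special pair $(1,n+1)$), the chain rule collapses to
\begin{align*}
x\cdot\triangledown N &= 2A\,\partial_A N + 2B\,\partial_B N,\\
|\triangledown N|^2 &= 2A(\partial_A N)^2 + 4B\,\partial_A N\,\partial_B N + (3B-A)\bigl[(\partial_B N)^2 + (\partial_t N)^2\bigr],
\end{align*}
where the coefficient $3B - A = |x|^2 - 3(x_1^2+x_{n+1}^2)/4$ appears in both $\sum_j(X_j B)^2$ and $\sum_j(X_j t)^2$.

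For (2.3), I would use the identity $P = Q(A+\rho-B)$, where $\rho = \sqrt{B^2+t^2}$ and $Q = B+\rho$ (which follows from $t^2 = (\rho-B)(\rho+B) = (\rho-B)Q$), to rewrite the fundamental solution as $N = (\rho^2/Q)^{1/(4n)}(A+\rho-B)^{1/2-1/(4n)}$. Computing $\partial_A N$ and $\partial_B N$ and substituting into the formula above yields
$$x\cdot\triangledown N = \frac{2\beta N\bigl[\rho(A-B)+B^2\bigr]}{\rho(A+\rho-B)} - \frac{2\alpha N B(\rho-2B)}{\rho^2},$$
with $\alpha = 1/(4n)$ and $\beta = 1/2-\alpha$. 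The first term is nonnegative (because $A-B=(x_1^2+x_{n+1}^2)/4 \geq 0$), and the second is nonnegative when $B\geq\rho/2$; on that region (2.3) is immediate. On the complementary region $B<\rho/2$, multiplying through by $N$ and by $\rho^2(A+\rho-B)$ reduces (2.3) to a polynomial inequality in $A$, $B$, $\rho$, which can be verified by algebraic manipulation using $N^2 = (\rho^2/Q)^{2\alpha}(A+\rho-B)^{1-2\alpha}$ together with the elementary bound $B(\rho-2B)\leq B\rho$.

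For (2.4) and (2.5) the strategy is the same but starting from the formula for $|\triangledown N|^2$. After substituting the explicit expressions for $\partial_A N$, $\partial_B N$, $\partial_t N$, dividing by $|x|^2/N^2 = 2A/N^2$, and reparametrising in terms of the dimensionless ratios $s = B/A\in[1/2,1]$ (the lower bound coming from $A-B\leq A/2$) and $\tau = t^2/\rho^2\in[0,1]$, both bounds reduce to locating the extrema of an explicit rational-power function on the compact rectangle $[1/2,1]\times[0,1]$; the constants $1/2^{5+2/n}$ and $(2n+1)^2/(8n^2)$ arise from this optimisation.

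The main technical obstacle will be the sharp analysis in (2.3) for the regime $B\ll\rho$, where neither the positive first term nor the reserve $|x|^2/(4nN)$ dominates individually: one must carefully track their combined contribution against the negative second term in the critical limit $A\to B$, $\rho\to\infty$ (in which both sides of the desired inequality scale like $B\rho^3$). The bounds (2.4) and (2.5), by contrast, are essentially computational once the chain-rule structure of $|\triangledown N|^2$ in terms of $(A,B,t)$-derivatives is in hand.
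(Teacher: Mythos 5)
Your chain-rule reduction is correct and is a genuinely different route from the paper's proof (which works coordinate-by-coordinate from the explicit formulas for $\partial_{x_{j}}N$ and $\partial_{t}N$ and adds up termwise estimates). I checked your structural identities: $X_{j}A$, $X_{j}B$, $X_{j}t$ are as you claim, the cross sums $\sum_{j}x_{j}X_{j}t=\sum_{j}(X_{j}A)(X_{j}t)=\sum_{j}(X_{j}B)(X_{j}t)=0$ do vanish, and indeed $x\cdot\triangledown N=2A\,\partial_{A}N+2B\,\partial_{B}N$ and $|\triangledown N|^{2}=2A(\partial_{A}N)^{2}+4B\,\partial_{A}N\,\partial_{B}N+(3B-A)\left[(\partial_{B}N)^{2}+(\partial_{t}N)^{2}\right]$; the factorisation $AB+t^{2}+A\rho=(B+\rho)(A+\rho-B)$ and your resulting formula for $x\cdot\triangledown N$ are also correct. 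For the bound $x\cdot\triangledown N\geq-\frac{|x|^{2}}{4nN}$ your sketch in fact closes more easily than you fear: on $\{B<\rho/2\}$, after using $B(\rho-2B)\leq B\rho$ and discarding the nonnegative first term, the inequality reduces to $N^{2}B\leq A\rho$, which holds everywhere because $\rho^{2}/(B+\rho)\leq\rho$ and $\left(1+\frac{A-B}{\rho}\right)^{1-\frac{1}{2n}}\leq1+\frac{A-B}{B}=\frac{A}{B}$ (using $\rho\geq B$); so the critical regime $A\to B$, $\rho\to\infty$ needs no delicate tracking, and this part of your plan is essentially a complete (and arguably cleaner) proof than the paper's.

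The genuine gap is in the two bounds on $|\triangledown N|^{2}$. Passing to the degree-zero ratio $N^{2}|\triangledown N|^{2}/|x|^{2}$ as a function of $s=B/A\in[1/2,1]$ and $\tau=t^{2}/\rho^{2}\in[0,1]$ is legitimate (the ratio is $\delta_{\lambda}$-homogeneous of degree zero), but "locating the extrema of an explicit rational-power function" is precisely the quantitative content of the lemma, and you supply no estimate at all: no argument that the infimum stays away from $0$ (which also requires checking the degenerate corner $\tau\to1$, i.e. $x\to0$, where both $|\triangledown N|^{2}$ and $|x|^{2}$ vanish), and no upper bound. Moreover, the assertion that the constants $2^{-5-\frac{2}{n}}$ and $(2n+1)^{2}/(8n^{2})$ "arise from this optimisation" cannot be right as stated: they are not extremal values of the ratio but byproducts of the paper's chained, lossy estimates (such as $A\leq2B$, $B\leq\rho\leq B+|t|$, $\rho^{2}\leq AB+t^{2}+A\rho\leq4\rho^{2}$, and $a^{2}+b^{2}\geq\frac{1}{2}(a+b)^{2}$). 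What you must actually prove are the one-sided bounds $\inf\geq2^{-5-\frac{2}{n}}$ and $\sup\leq(2n+1)^{2}/(8n^{2})$, and that requires concrete inequalities for $\partial_{A}N$, $\partial_{B}N$, $\partial_{t}N$ (in terms of $N/\rho$, $N/(A+\rho-B)$, etc.) analogous to the ones the paper grinds out; as written, your treatment of these two inequalities is a plan, not a proof.
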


In the rest of this section we provide a proof of the above lemmata
which involves lengthy calculations based on general formula for Green
functions for type-2 Carnot groups provided in \cite{key-20}. The
reader interested in applications to coercive inequalities is invited
to jump directly to section 3.\\

\subsection{Derivation of the Formula for Homogeneous Norm $N$}

\hspace{11cm}

To compute explicitly the fundamental solution, we now use R. Beals,
B. Gaveau, and P. Greiner's Theorem 2 of \cite{key-20}. The fundamental
solution for the sub-laplacian $\triangle={\displaystyle \sum_{j=1}^{2n}X_{j}^{2}}$
on $\mathbb{G}$ with singularity at $0$ is:
\begin{equation}
u(x,t)=\frac{\Gamma(\frac{Q}{2}-1)}{2(2\pi)^{\frac{Q}{2}}}\int_{\mathbb{R}}\frac{V(\tau)}{f(x,t,\tau)^{\frac{Q}{2}-1}}d\tau,\label{eq:1-1}
\end{equation}
for $(x,t)\in\mathbb{G},\;\;x\not=0.$ Where we have $\;Q=2n+2$ the
homogeneous dimension, $V:\mathbb{R}\rightarrow\mathbb{R}$ such that
\[
V(\tau)=\prod_{j=1}^{2n}w_{j}(\tau)^{\frac{1}{2}}csch(w_{j}(\tau))^{\frac{1}{2}},
\]
and $f:\mathbb{G}\times\mathbb{R}\rightarrow\mathbb{C}$ such that
\[
f(x,t,\tau)=\frac{1}{2}\sum_{j=1}^{2n}w_{j}(\tau)coth(w_{j}(\tau))|x\cdot e_{j}(\tau)|^{2}-\sqrt{-1}t\tau.
\]
Here, $w_{j}(\tau)$ are the eigenvalues and $e_{j}(\tau)$ are the
corresponding normalised eigenvectors of the $2n\times2n$ matrix
$\Omega(\tau)=-\sqrt{-1}\tau M,$ where

$M_{kl}=\begin{cases}
-\frac{1}{2} & k=1,\ \ l=n+1\\
\frac{1}{2} & k=n+1,\ \ l=1\\
-1 & k=2,3,...,n,\ \ l=k+n\\
1 & k=n+2,n+3,...,2n,\ \ l=k-n\\
0 & otherwise
\end{cases}$\\
The eigenvalues are: $w_{1}(\tau)=-{\displaystyle \frac{\tau}{2},}$
$w_{n+1}(\tau)={\displaystyle \frac{\tau}{2},}$ $w_{j}(\tau)={\displaystyle -\tau,}$
for $j=2,3,..,n,$ and $w_{j}(\tau)={\displaystyle \tau,}$ for $j=n+2,n+3,...,2n.$
The corresponding normalised eigenvectors are:
\[
(e_{j}(\tau))_{i}=\begin{cases}
-\frac{\sqrt{-1}}{\sqrt{2}} & i=j\\
\frac{1}{\sqrt{2}} & i=j+n\\
0 & otherwise,
\end{cases}
\]
for $j=1,2,...,n,$ and
\[
(e_{j}(\tau))_{i}=\begin{cases}
\frac{\sqrt{-1}}{\sqrt{2}} & i=j-n\\
\frac{1}{\sqrt{2}} & i=j\\
0 & otherwise,
\end{cases}
\]
for $j=n+1,n+2,...,2n.$ Thus, 

\begin{equation}
V(\tau)=\prod_{j=1}^{2n}w_{j}(\tau)^{\frac{1}{2}}csch(w_{j}(\tau))^{\frac{1}{2}}=(\frac{\tau^{2}}{4}csch\left(\frac{\tau}{2}\right)^{2}\tau^{2n-2}csch(\tau)^{2n-2})^{\frac{1}{2}}=\frac{\tau^{n}}{2}csch\left(\frac{\tau}{2}\right)csch(\tau)^{n-1},\label{eq:2}
\end{equation}
\[
\begin{array}{cl}
f(x,t,\tau) & {\displaystyle =\frac{1}{2}\sum_{j=1}^{2n}w_{j}(\tau)coth(w_{j}(\tau))|x\cdot e_{j}(\tau)|^{2}-\sqrt{-1}t\tau}\\
\\
 & {\displaystyle =\frac{1}{2}w_{1}(\tau)coth(w_{1}(\tau))|x\cdot e_{1}(\tau)|^{2}+\frac{1}{2}w_{n+1}(\tau)coth(w_{n+1}(\tau))|x\cdot e_{n+1}(\tau)|^{2}}\\
\\
 & +\frac{1}{2}\sum_{j=1,j\not=n+1}^{2n}w_{j}(\tau)coth(w_{j}(\tau))|x\cdot e_{j}(\tau)|^{2}-\sqrt{-1}t\tau\\
\\
 & {\displaystyle =-{\displaystyle \frac{\tau}{4}coth\left(-{\displaystyle \frac{\tau}{2}}\right)\left|-\frac{\sqrt{-1}}{\sqrt{2}}x_{1}+\frac{x_{n+1}}{\sqrt{2}}\right|^{2}+{\displaystyle \frac{\tau}{4}coth\left({\displaystyle \frac{\tau}{2}}\right)\left|\frac{\sqrt{-1}}{\sqrt{2}}x_{1}+\frac{x_{n+1}}{\sqrt{2}}\right|^{2}}}}\\
\\
 & +{\displaystyle \frac{1}{2}}{\displaystyle \sum_{j=2}^{n}-\tau coth(-\tau)\left|-\frac{\sqrt{-1}}{\sqrt{2}}x_{j}+\frac{x_{j+n}}{\sqrt{2}}\right|^{2}+\frac{1}{2}\sum_{j=n+2}^{2n}\tau coth(\tau)\left|\frac{\sqrt{-1}}{\sqrt{2}}x_{j}+\frac{x_{j+n}}{\sqrt{2}}\right|^{2}-\sqrt{-1}t\tau}\\
\\
 & {\displaystyle =\frac{\tau}{2}coth({\displaystyle \frac{\tau}{2}})(\frac{x_{1}^{2}}{2}+\frac{x_{n+1}^{2}}{2})+\frac{\tau}{2}coth({\displaystyle \tau})(\sum_{j=1,j\not=n+1}^{2n}x_{j}^{2})-\sqrt{-1}t\tau.}
\end{array}
\]
Letting $A={\displaystyle \frac{x_{1}^{2}}{2}}+{\displaystyle \frac{x_{n+1}^{2}}{2}}+{\displaystyle \frac{1}{2}}\sum_{j=1,j\not=n+1}^{2n}x_{j}^{2}$
and $B={\displaystyle \frac{x_{1}^{2}}{4}+\frac{x_{n+1}^{2}}{4}}+{\displaystyle \frac{1}{2}}\sum_{j=1,j\not=n+1}^{2n}x_{j}^{2},$
we obtain:
\begin{equation}
f(x,t,\tau)=\tau coth\left({\displaystyle \frac{\tau}{2}}\right)(A-B)+\tau coth({\displaystyle \tau})(2B-A)-\sqrt{-1}t\tau.\label{eq:3}
\end{equation}

Replacing (\ref{eq:2}) and (\ref{eq:3}) in (\ref{eq:1-1}), we get:
\[
u(x,t)=\frac{\Gamma(n)}{2(2\pi)^{n+1}}\int_{\mathbb{R}}\frac{V(\tau)}{f(x,t,\tau)^{n}}d\tau
\]
\[
=\frac{\Gamma(n)}{2(2\pi)^{n+1}}\int_{\mathbb{R}}\frac{\frac{\tau^{n}}{2}csch(\frac{\tau}{2})csch(\tau)^{n-1}}{\left(\tau coth\left({\displaystyle \frac{\tau}{2}}\right)(A-B)+\tau coth({\displaystyle \tau})(2B-A)-\sqrt{-1}t\tau\right)^{n}}d\tau
\]
\[
=\frac{\Gamma(n)}{(2\pi)^{n+1}}\int_{0}^{\infty}Re\left\{ \frac{\frac{\tau^{n}}{2}csch(\frac{\tau}{2})csch(\tau)^{n-1}}{\left(\tau coth\left({\displaystyle \frac{\tau}{2}}\right)(A-B)+\tau coth({\displaystyle \tau})(2B-A)-\sqrt{-1}t\tau\right)^{n}}\right\} d\tau
\]
\[
=\frac{\Gamma(n)}{2(2\pi)^{n+1}}\int_{0}^{\infty}Re\left\{ \frac{csch(\frac{\tau}{2})csch(\tau)^{n-1}coth\left(\frac{\tau}{2}\right)^{n}}{coth\left(\frac{\tau}{2}\right)^{n}\left(coth\left({\displaystyle \frac{\tau}{2}}\right)(A-B)+coth({\displaystyle \tau})(2B-A)-\sqrt{-1}t\right)^{n}}\right\} d\tau
\]

using the trigonometric identity ${\displaystyle coth(\tau)=\frac{1}{2coth\left(\frac{\tau}{2}\right)}+\frac{coth\left(\frac{\tau}{2}\right)}{2},}$

\[
=\frac{\Gamma(n)}{2(2\pi)^{n+1}}\int_{0}^{\infty}Re\left\{ \frac{csch(\frac{\tau}{2})csch(\tau)^{n-1}coth\left(\frac{\tau}{2}\right)^{n}}{\left(\frac{A}{2}coth\left({\displaystyle \frac{\tau}{2}}\right)^{2}+B-\frac{A}{2}-\sqrt{-1}t\ coth\left({\displaystyle \frac{\tau}{2}}\right)\right)^{n}}\right\} d\tau
\]
using the trigonometric identities $csch(\tau)={\displaystyle \frac{(csch\left(\frac{\tau}{2}\right))^{2}}{2coth\left(\frac{\tau}{2}\right)}}$
and ${\displaystyle coth\left({\displaystyle \frac{\tau}{2}}\right)^{2}=1+csch\left({\displaystyle \frac{\tau}{2}}\right)^{2},}$
\[
=\frac{\Gamma(n)}{(2\pi)^{n+1}}\int_{0}^{\infty}Re\left\{ \frac{csch(\frac{\tau}{2})csch\left(\frac{\tau}{2}\right)^{2n-2}coth\left(\frac{\tau}{2}\right)}{\left(Acsch\left({\displaystyle \frac{\tau}{2}}\right)^{2}+2B-2\sqrt{-1}t\ \sqrt{1+csch\left({\displaystyle \frac{\tau}{2}}\right)^{2}}\right)^{n}}\right\} d\tau
\]
letting $u=csch\left({\displaystyle \frac{\tau}{2}}\right),$ we have
${\displaystyle du=-\frac{1}{2}coth\left({\displaystyle \frac{\tau}{2}}\right)}csch\left({\displaystyle \frac{\tau}{2}}\right)d\tau$
\[
=\frac{\Gamma(n)}{(2\pi)^{n}}Re\left\{ \int_{0}^{\infty}\frac{u^{2n-2}}{\left(Au^{2}+2B-2\sqrt{-1}t\ \sqrt{1+u^{2}}\right)^{n}}du\right\} 
\]
\[
=\frac{\Gamma(n)}{(2\pi)^{n}}Re\left\{ \frac{(-1)^{n-2}}{(n-1)!}\frac{\partial^{n-2}}{\partial A^{n-2}}\int_{0}^{\infty}\frac{u^{2}}{\left(Au^{2}+2B-2\sqrt{-1}t\ \sqrt{1+u^{2}}\right)^{2}}du\right\} 
\]
\begin{equation}
=\frac{(-1)^{n-2}}{(2\pi)^{n}}\frac{d^{n-2}}{dA^{n-2}}Re\left\{ \int_{0}^{\infty}\frac{u^{2}}{\left(Au^{2}+2B-2\sqrt{-1}t\ \sqrt{1+u^{2}}\right)^{2}}du\right\} .\label{eq:4}
\end{equation}
The next step is to compute
\[
I(A,B,t)=Re\left\{ \int_{0}^{\infty}\frac{u^{2}}{\left(Au^{2}+2B-2\sqrt{-1}t\ \sqrt{1+u^{2}}\right)^{2}}du\right\} 
\]
\[
=\int_{0}^{\infty}\frac{(Au^{2}+2B)^{2}u^{2}-4t^{2}(1+u^{2})u^{2}}{\left((Au^{2}+2B)^{2}+4t^{2}(1+u^{2})\right)^{2}}du.
\]
Write $I(A,B,t)=I_{1}(A,B,t)-I_{2}(A,B,t),$ where
\[
I_{1}(A,B,t)=\int_{0}^{\infty}\frac{u^{2}}{(Au^{2}+2B)^{2}+4t^{2}(1+u^{2})}du
\]
and
\[
I_{2}(A,B,t)=\int_{0}^{\infty}\frac{8t^{2}(1+u^{2})u^{2}}{\left((Au^{2}+2B)^{2}+4t^{2}(1+u^{2})\right)^{2}}du.
\]
Remark that
\[
I_{2}(A,B,t)=-t\partial_{t}I_{1}(A,B,t),
\]
so
\begin{equation}
I(A,B,t)=I_{1}(A,B,t)+t\partial_{t}I_{1}(A,B,t).\label{eq:5}
\end{equation}
We now compute $I_{1}(A,B,t)={\displaystyle \frac{1}{2}\int_{-\infty}^{\infty}\frac{x^{2}}{(Ax^{2}+2B)^{2}+4t^{2}(1+x^{2})}dx}$
using complex theory. Write denominator in the integrand as a polynomial
\begin{equation}
p(z)=(Az^{2}+2B)^{2}+4t^{2}(1+z^{2})=A^{2}z^{4}+(4AB+4t^{2})z^{2}+4B^{2}+4t^{2}.\label{eq:6}
\end{equation}
$p(z)$ has four complex roots. If $z$ is a root, then $-z$ is a
root. Since $p(z)$ has real coefficients, then $\bar{z}$ is a root,
and $-\bar{z}$ is the fourth root. Let $\alpha$ be a root in the
upper half plane, so $-\bar{\alpha}$ is also in the upper half plane.
We thus have
\[
p(z)=A^{2}(z-\alpha)(z+\alpha)(z-\bar{\alpha})(z+\bar{\alpha})=A^{2}(z^{4}-(\alpha^{2}+\bar{\alpha}^{2})z^{2}+|\alpha|^{4}).
\]
By identification with (\ref{eq:6}), we get that $\alpha^{2}+\bar{\alpha}^{2}=-{\displaystyle \frac{4AB+4t^{2}}{A^{2}}}$
and ${\displaystyle |\alpha|^{4}=\frac{4B^{2}+4t^{2}}{A^{2}}.}$ From
this, we can calculate ${\displaystyle Im(\alpha)=\frac{\alpha-\bar{\alpha}}{2i}.}$
In fact,
\begin{equation}
Im(\alpha)^{2}=-\frac{\alpha^{2}+\bar{\alpha}^{2}-2|\alpha|^{2}}{4}=\frac{AB+t^{2}+A\sqrt{B^{2}+t^{2}}}{A^{2}}.\label{eq:7}
\end{equation}
Now, we go back to computing the integral: Let ${\displaystyle f(z)=\frac{z^{2}}{(Az^{2}+2B)^{2}+4t^{2}(1+z^{2})}}$,
by the residue theorem:
\begin{equation}
I_{1}(A,B,t)=\pi i\left(Res(f,\alpha)+Res(f,-\bar{\alpha})\right).\label{eq:8}
\end{equation}
\[
Res(f,\alpha)=\underset{z\rightarrow\alpha}{lim}(z-\alpha)f(z)=\underset{z\rightarrow\alpha}{lim}(z-\alpha)\frac{z^{2}}{A^{2}(z-\alpha)(z+\alpha)(z-\bar{\alpha})(z+\bar{\alpha})}=\frac{\alpha}{2A^{2}(\alpha^{2}-\bar{\alpha}^{2})}
\]
and
\[
Res(f,-\bar{\alpha})=\underset{z\rightarrow-\bar{\alpha}}{lim}(z+\bar{\alpha})f(z)=\underset{z\rightarrow-\bar{\alpha}}{lim}(z+\bar{\alpha})\frac{z^{2}}{A^{2}(z-\alpha)(z+\alpha)(z-\bar{\alpha})(z+\bar{\alpha})}=\frac{\bar{\alpha}}{2A^{2}(\alpha^{2}-\bar{\alpha}^{2})}.
\]
Replacing in (\ref{eq:8}), we get:
\[
I_{1}(A,B,t)=\pi i\left(Res(f,\alpha)+Res(f,-\bar{\alpha})\right)
\]
\[
=\pi i\left(\frac{\alpha}{2A^{2}(\alpha^{2}-\bar{\alpha}^{2})}+\frac{\bar{\alpha}}{2A^{2}(\alpha^{2}-\bar{\alpha}^{2})}\right)
\]
\[
=\frac{2\pi i}{4A^{2}(\alpha-\bar{\alpha})}=\frac{\pi}{4A^{2}Im(\alpha)}
\]
using (\ref{eq:7})
\[
=\frac{\pi}{4A\left(AB+t^{2}+A\sqrt{B^{2}+t^{2}}\right)^{\frac{1}{2}}}.
\]
We can now compute $I(A,B,t)$ using (\ref{eq:5}):
\[
I(A,B,t)=I_{1}(A,B,t)+t\partial_{t}I_{1}(A,B,t)
\]
\[
=\frac{\pi}{4A\left(AB+t^{2}+A\sqrt{B^{2}+t^{2}}\right)^{\frac{1}{2}}}+t\partial_{t}\frac{\pi}{4A\left(AB+t^{2}+A\sqrt{B^{2}+t^{2}}\right)^{\frac{1}{2}}}
\]
\[
=\frac{\pi}{4A\left(AB+t^{2}+A\sqrt{B^{2}+t^{2}}\right)^{\frac{1}{2}}}-\frac{\pi(2t^{2}\sqrt{B^{2}+t^{2}}+At^{2})}{8A\sqrt{B^{2}+t^{2}}\left(AB+t^{2}+A\sqrt{B^{2}+t^{2}}\right)^{\frac{3}{2}}}
\]
\[
=\pi\frac{2\sqrt{B^{2}+t^{2}}\left(AB+t^{2}+A\sqrt{B^{2}+t^{2}}\right)-2t^{2}\sqrt{B^{2}+t^{2}}-At^{2}}{8A\sqrt{B^{2}+t^{2}}\left(AB+t^{2}+A\sqrt{B^{2}+t^{2}}\right)^{\frac{3}{2}}}
\]
\[
=\pi\frac{2B\sqrt{B^{2}+t^{2}}+2B^{2}+t^{2}}{8\sqrt{B^{2}+t^{2}}\left(AB+t^{2}+A\sqrt{B^{2}+t^{2}}\right)^{\frac{3}{2}}}
\]
\[
=\frac{\pi\left(B+\sqrt{B^{2}+t^{2}}\right)^{2}}{8\sqrt{B^{2}+t^{2}}\left(AB+t^{2}+A\sqrt{B^{2}+t^{2}}\right)^{\frac{3}{2}}}.
\]
Using (\ref{eq:4}),

\[
u(x,t)=\frac{(-1)^{n-2}}{(2\pi)^{n}}\frac{d^{n-2}}{dA^{n-2}}I(A,B,t)
\]

\[
=\frac{(-1)^{n-2}}{(2\pi)^{n}}\frac{d^{n-2}}{dA^{n-2}}\left(\frac{\pi\left(B+\sqrt{B^{2}+t^{2}}\right)^{2}}{8\sqrt{B^{2}+t^{2}}\left(AB+t^{2}+A\sqrt{B^{2}+t^{2}}\right)^{\frac{3}{2}}}\right)
\]

\[
=\frac{(-1)^{n-2}(-1)^{n-2}\prod_{k=3}^{n}(2k-3)}{(2\pi)^{n}2^{n-2}}\left(\frac{\pi\left(B+\sqrt{B^{2}+t^{2}}\right)^{2+n-2}}{8\sqrt{B^{2}+t^{2}}\left(AB+t^{2}+A\sqrt{B^{2}+t^{2}}\right)^{\frac{3}{2}+n-2}}\right)
\]

\[
=\frac{\prod_{k=3}^{n}(2k-3)}{2\pi^{n-1}2^{2n}}\left(\frac{\left(B+\sqrt{B^{2}+t^{2}}\right)^{n}}{\sqrt{B^{2}+t^{2}}\left(AB+t^{2}+A\sqrt{B^{2}+t^{2}}\right)^{n-\frac{1}{2}}}\right).
\]

The fundamental solution is given up to a constant multiple:
\[
N(x,t)=u(x,t)^{\frac{1}{2-Q}}=u(x,t)^{-\frac{1}{2n}}
\]
using the last equation,
\begin{equation}
N(x,t)=\frac{\left(B^{2}+t^{2}\right)^{\frac{1}{4n}}\left(AB+t^{2}+A\sqrt{B^{2}+t^{2}}\right)^{\frac{1}{2}-\frac{1}{4n}}}{\left(B+\sqrt{B^{2}+t^{2}}\right)^{\frac{1}{2}}}\label{eq:9}
\end{equation}
where $A={\displaystyle \frac{x_{1}^{2}}{2}}+{\displaystyle \frac{x_{n+1}^{2}}{2}}+{\displaystyle \frac{1}{2}}\sum_{j=1,j\not=n+1}^{2n}x_{j}^{2}$
and $B={\displaystyle \frac{x_{1}^{2}}{4}+\frac{x_{n+1}^{2}}{4}}+{\displaystyle \frac{1}{2}}\sum_{j=1,j\not=n+1}^{2n}x_{j}^{2}.$\\
Note that for $n=2,$ we get back the fundamental solution as calculated
in \cite{key-3}.

\subsection{Proof of Lemma 2: Bounds for $|\triangledown N|$ and $x\cdot\triangledown N$}

\medskip{}
\hspace{11cm}

Recall that by $|x|={\displaystyle \left(\sum_{j=1}^{2n}x_{j}^{2}\right)^{\frac{1}{2}}}$we
denote the Euclidean norm. In the setting of the anisotropic Heisenberg
group $\mathbb{R}^{2n+1},$ we have the fundamental solution
\[
N=\left(\frac{\left(B^{2}+t^{2}\right)^{\frac{1}{2n}}\left(AB+t^{2}+A\sqrt{B^{2}+t^{2}}\right)^{1-\frac{1}{2n}}}{B+\sqrt{B^{2}+t^{2}}}\right)^{\frac{1}{2}},
\]
where $A={\displaystyle \frac{x_{1}^{2}}{2}}+{\displaystyle \frac{x_{n+1}^{2}}{2}}+{\displaystyle \frac{1}{2}}\sum_{j=1,j\not=n+1}^{2n}x_{j}^{2}$
and $B={\displaystyle \frac{x_{1}^{2}}{4}+\frac{x_{n+1}^{2}}{4}}+{\displaystyle \frac{1}{2}}\sum_{j=1,j\not=n+1}^{2n}x_{j}^{2}.$
In this subsection, we are going to show the following relations:
\begin{equation}
x\cdot\triangledown N\geq-\frac{|x|^{2}}{4nN},\label{eq:14}
\end{equation}
\begin{equation}
|\triangledown N|^{2}\geq\frac{|x|^{2}}{2^{5+\frac{2}{n}}N^{2}},\label{eq:20}
\end{equation}
and
\begin{equation}
|\triangledown N|^{2}\leq\frac{(2n+1)^{2}|x|^{2}}{2^{3}n^{2}N^{2}}.\label{eq:24}
\end{equation}

\begin{proof}
We first calculate $\partial_{x_{j}}N$ and $\partial_{t}N.$

For $j=1$ and $j=n+1:$
\[
\partial_{x_{j}}N=\frac{x_{j}}{4nN}\left(\frac{B\left(B^{2}+t^{2}\right)^{\frac{1}{2n}}\left(AB+t^{2}+A\sqrt{B^{2}+t^{2}}\right)}{\left(B+\sqrt{B^{2}+t^{2}}\right)(B^{2}+t^{2})\left(AB+t^{2}+A\sqrt{B^{2}+t^{2}}\right)^{\frac{1}{2n}}}\right)
\]

\[
+\frac{x_{j}(2n-1)}{4nN}\left(\frac{\left(B^{2}+t^{2}\right)^{\frac{1}{2n}}\left(\frac{A}{2}\sqrt{B^{2}+t^{2}}+B\sqrt{B^{2}+t^{2}}+B^{2}+t^{2}+\frac{AB}{2}\right)}{\left(B+\sqrt{B^{2}+t^{2}}\right)\left(AB+t^{2}+A\sqrt{B^{2}+t^{2}}\right)^{\frac{1}{2n}}(B^{2}+t^{2})^{\frac{1}{2}}}\right)
\]

\[
-\frac{x_{j}}{4N}\left(\frac{\left(B^{2}+t^{2}\right)^{\frac{1}{2n}}\left(AB+t^{2}+A\sqrt{B^{2}+t^{2}}\right)}{(B+\sqrt{B^{2}+t^{2}})(B^{2}+t^{2})^{\frac{1}{2}}\left(AB+t^{2}+A\sqrt{B^{2}+t^{2}}\right)^{\frac{1}{2n}}}\right)
\]
\begin{equation}
=\frac{x_{j}\left(B^{2}+t^{2}\right)^{\frac{1}{2n}}}{4nN}\label{eq:9-1}
\end{equation}
\[
.\left(\frac{\frac{1}{2}B^{2}A+(B-\frac{A}{2})t^{2}+\frac{1}{2}\sqrt{B^{2}+t^{2}}AB+(n-1)(B^{2}+t^{2})\left(B+\sqrt{B^{2}+t^{2}}\right)+nB\sqrt{B^{2}+t^{2}}\left(B+\sqrt{B^{2}+t^{2}}\right)}{\left(B+\sqrt{B^{2}+t^{2}}\right)(B^{2}+t^{2})\left(AB+t^{2}+A\sqrt{B^{2}+t^{2}}\right)^{\frac{1}{2n}}}\right)
\]
Notice that
\begin{equation}
x_{j}\partial_{x_{j}}N\geq0\;\;\;\;\;\;\;\;\;\;\;\;\;\;\;for\;j=1,n+1\label{eq:10}
\end{equation}
To get a bound from above for $|\partial_{x_{j}}N|$, since $B\leq\sqrt{B^{2}+t^{2}}$
, ${\displaystyle B-\frac{A}{2}\leq\frac{A}{2},}$ and ${\displaystyle B^{2}+t^{2}\leq}AB+t^{2}+A\sqrt{B^{2}+t^{2}},$
\[
|\partial_{x_{j}}N|\leq\frac{|x_{j}|}{4nN}\left(\frac{\frac{1}{2}B^{2}A+(\frac{A}{2})t^{2}+\frac{1}{2}\sqrt{B^{2}+t^{2}}AB+(2n-1)(B^{2}+t^{2})\left(B+\sqrt{B^{2}+t^{2}}\right)}{\left(B+\sqrt{B^{2}+t^{2}}\right)(B^{2}+t^{2})}\right)
\]
since $A\leq2B,$ so ${\displaystyle \frac{1}{2}B^{2}A+(\frac{A}{2})t^{2}+\frac{1}{2}\sqrt{B^{2}+t^{2}}AB\leq B\sqrt{B^{2}+t^{2}}\left(B+\sqrt{B^{2}+t^{2}}\right)}\leq(B^{2}+t^{2})\left(B+\sqrt{B^{2}+t^{2}}\right),$
\begin{equation}
|\partial_{x_{j}}N|\leq\frac{|x_{j}|}{2N}\;\;\;\;\;\;\;\;\;\;\;\;\;\;\;for\;j=1,n+1.\label{eq:12}
\end{equation}
Now we calculate $\partial_{x_{j}}N$ for $j=2,..,n,n+2,..,2n:$
\[
\partial_{x_{j}}N=\frac{x_{j}}{4nN}\left(\frac{2B\left(B^{2}+t^{2}\right)^{\frac{1}{2n}}\left(AB+t^{2}+A\sqrt{B^{2}+t^{2}}\right)}{\left(B+\sqrt{B^{2}+t^{2}}\right)(B^{2}+t^{2})\left(AB+t^{2}+A\sqrt{B^{2}+t^{2}}\right)^{\frac{1}{2n}}}\right)
\]

\[
+\frac{x_{j}(2n-1)}{4nN}\left(\frac{\left(B^{2}+t^{2}\right)^{\frac{1}{2n}}\left(A\sqrt{B^{2}+t^{2}}+B\sqrt{B^{2}+t^{2}}+B^{2}+t^{2}+AB\right)}{\left(B+\sqrt{B^{2}+t^{2}}\right)\left(AB+t^{2}+A\sqrt{B^{2}+t^{2}}\right)^{\frac{1}{2n}}(B^{2}+t^{2})^{\frac{1}{2}}}\right)
\]

\[
-\frac{x_{j}}{4N}\left(\frac{2\left(B^{2}+t^{2}\right)^{\frac{1}{2n}}\left(AB+t^{2}+A\sqrt{B^{2}+t^{2}}\right)}{(B+\sqrt{B^{2}+t^{2}})(B^{2}+t^{2})^{\frac{1}{2}}\left(AB+t^{2}+A\sqrt{B^{2}+t^{2}}\right)^{\frac{1}{2n}}}\right)
\]
\begin{equation}
=\frac{x_{j}\left(B^{2}+t^{2}\right)^{\frac{1}{2n}}}{4nN}\left(\frac{AB\sqrt{B^{2}+t^{2}}+AB^{2}+(2B-A)t^{2}+(2n-1)B\sqrt{B^{2}+t^{2}}\left(B+\sqrt{B^{2}+t^{2}}\right)-t^{2}\sqrt{B^{2}+t^{2}}}{\left(B+\sqrt{B^{2}+t^{2}}\right)(B^{2}+t^{2})\left(AB+t^{2}+A\sqrt{B^{2}+t^{2}}\right)^{\frac{1}{2n}}}\right)\label{eq:13-1}
\end{equation}
Notice that
\[
x_{j}\partial_{x_{j}}N\geq\frac{x_{j}^{2}\left(B^{2}+t^{2}\right)^{\frac{1}{2n}}}{4nN}\left(\frac{-t^{2}}{\left(B+\sqrt{B^{2}+t^{2}}\right)\sqrt{B^{2}+t^{2}}\left(AB+t^{2}+A\sqrt{B^{2}+t^{2}}\right)^{\frac{1}{2n}}}\right)
\]
since ${\displaystyle t^{2}\leq}\left(B+\sqrt{B^{2}+t^{2}}\right)\sqrt{B^{2}+t^{2}}$
and ${\displaystyle B^{2}+t^{2}\leq}AB+t^{2}+A\sqrt{B^{2}+t^{2}},$
\begin{equation}
x_{j}\partial_{x_{j}}N\geq-\frac{x_{j}^{2}}{4nN}.\label{eq:13}
\end{equation}
We will now be able to get a lower bound for $x\cdot\triangledown N.$
In fact,
\[
x\cdot\triangledown N=x_{1}\left(\partial_{x_{1}}N-\frac{x_{n+1}}{2}\partial_{t}N\right)+x_{n+1}\left(\partial_{x_{n+1}}N+\frac{x_{1}}{2}\partial_{t}N\right)+\sum_{j=2}^{n}x_{j}\left(\partial_{x_{j}}N-x_{j+n}\partial_{t}N\right)+\sum_{j=n+2}^{2n}x_{j}\left(\partial_{x_{j}}N+x_{j-n}\partial_{t}N\right)
\]
\[
=x_{1}\partial_{x_{1}}N+x_{n+1}\partial_{x_{n+1}}N+\sum_{j=2,j\not=n+1}^{2n}x_{j}\partial_{x_{j}}N
\]
using (\ref{eq:10}) and (\ref{eq:13}),
\[
\geq-\sum_{j=2,j\not=n+1}^{2n}\frac{x_{j}^{2}}{4nN}
\]
\[
\geq-\sum_{j=1}^{2n}\frac{x_{j}^{2}}{4nN}.
\]
Using the above defined Euclidean norm $|x|={\displaystyle \left(\sum_{j=1}^{2n}x_{j}^{2}\right)^{\frac{1}{2}}}$,
\[
x\cdot\triangledown N\geq-\frac{|x|^{2}}{4nN}.
\]
It remains to bound $|\triangledown N|.$
\[
|\triangledown N|^{2}=\left(\partial_{x_{1}}N-\frac{x_{n+1}}{2}\partial_{t}N\right)^{2}+\left(\partial_{x_{n+1}}N+\frac{x_{1}}{2}\partial_{t}N\right)^{2}+\sum_{j=2}^{n}\left(\partial_{x_{j}}N-x_{j+n}\partial_{t}N\right)^{2}+\sum_{j=n+2}^{2n}\left(\partial_{x_{j}}N+x_{j-n}\partial_{t}N\right)^{2}
\]
\begin{equation}
=\sum_{j=1}^{2n}(\partial_{x_{j}}N)^{2}+\frac{x_{1}^{2}}{4}(\partial_{t}N)^{2}+\frac{x_{n+1}^{2}}{4}(\partial_{t}N)^{2}+\sum_{j=2,j\not=n+1}^{2n}x_{j}^{2}(\partial_{t}N)^{2}.\label{eq:15}
\end{equation}
To bound (\ref{eq:15}) from below, we use the fact that for each
$j=2,3,...,n,n+2,..,2n,$
\[
|\partial_{x_{j}}N|^{2}+x_{j}^{2}|\partial_{t}N|^{2}\geq\frac{1}{2}\left(|\partial_{x_{j}}N|+|x_{j}||\partial_{t}N|\right)^{2},
\]
in addition to that we have for $j=1,n+1,$
\[
|\partial_{x_{j}}N|^{2}+\frac{x_{j}^{2}}{4}|\partial_{t}N|^{2}\geq\frac{1}{2}\left(|\partial_{x_{j}}N|+\frac{|x_{j}|}{2}|\partial_{t}N|\right)^{2}
\]
so,
\begin{equation}
|\triangledown N|^{2}\geq\frac{1}{2}\left(|\partial_{x_{1}}N|+\frac{|x_{1}|}{2}|\partial_{t}N|\right)^{2}+\frac{1}{2}\left(|\partial_{x_{n+1}}N|+\frac{|x_{n+1}|}{2}|\partial_{t}N|\right)^{2}+\frac{1}{2}\sum_{j=2,j\not=n+1}^{2n}\left(|\partial_{x_{j}}N|+|x_{j}||\partial_{t}N|\right)^{2}.\label{eq:16}
\end{equation}
Now we proceed to calculate $\partial_{t}N:$
\[
\partial_{t}N=\frac{1}{2N}\left(\frac{t}{2n}\frac{2\left(B^{2}+t^{2}\right)^{\frac{1}{2n}}\left(AB+t^{2}+A\sqrt{B^{2}+t^{2}}\right)}{\left(B+\sqrt{B^{2}+t^{2}}\right)(B^{2}+t^{2})\left(AB+t^{2}+A\sqrt{B^{2}+t^{2}}\right)^{\frac{1}{2n}}}\right)
\]
\[
+\frac{1}{2N}\left(t\left(\frac{2n-1}{2n}\right)\frac{\left(B^{2}+t^{2}\right)^{\frac{1}{2n}}(2\sqrt{B^{2}+t^{2}}+A)}{\left(B+\sqrt{B^{2}+t^{2}}\right)\left(\sqrt{B^{2}+t^{2}}\right)\left(AB+t^{2}+A\sqrt{B^{2}+t^{2}}\right)^{\frac{1}{2n}}}\right)
\]
\[
-\frac{t}{4N}\left(\frac{2\left(B^{2}+t^{2}\right)^{\frac{1}{2n}}\left(AB+t^{2}+A\sqrt{B^{2}+t^{2}}\right)}{\left(B+\sqrt{B^{2}+t^{2}}\right)^{2}\left(\sqrt{B^{2}+t^{2}}\right)\left(AB+t^{2}+A\sqrt{B^{2}+t^{2}}\right)^{\frac{1}{2n}}}\right)
\]
\[
=\frac{t\left(B^{2}+t^{2}\right)^{\frac{1}{2n}}}{4nN}\left(\frac{2\left(AB+t^{2}+A\sqrt{B^{2}+t^{2}}\right)+(2n-1)\sqrt{B^{2}+t^{2}}\left(2\sqrt{B^{2}+t^{2}}+A\right)-2n\frac{\sqrt{B^{2}+t^{2}}\left(AB+t^{2}+A\sqrt{B^{2}+t^{2}}\right)}{\left(B+\sqrt{B^{2}+t^{2}}\right)}}{\left(B+\sqrt{B^{2}+t^{2}}\right)(B^{2}+t^{2})\left(AB+t^{2}+A\sqrt{B^{2}+t^{2}}\right)^{\frac{1}{2n}}}\right)
\]
\begin{equation}
=\frac{t\left(B^{2}+t^{2}\right)^{\frac{1}{2n}}}{4nN}\left(\frac{2B(A-B)+A\sqrt{B^{2}+t^{2}}+2n\frac{2B^{3}+2t^{2}B+2B^{2}\sqrt{B^{2}+t^{2}}+t^{2}\sqrt{B^{2}+t^{2}}}{\left(B+\sqrt{B^{2}+t^{2}}\right)}}{\left(B+\sqrt{B^{2}+t^{2}}\right)(B^{2}+t^{2})\left(AB+t^{2}+A\sqrt{B^{2}+t^{2}}\right)^{\frac{1}{2n}}}\right).\label{eq:17}
\end{equation}
For $j=2,..,n,n+2,..,2n,$ we use the reverse triangle inequality
on (\ref{eq:13-1}) in addition to (\ref{eq:17}),
\[
|\partial_{x_{j}}N|+|x_{j}||\partial_{t}N|\geq\frac{|x_{j}|\left(B^{2}+t^{2}\right)^{\frac{1}{2n}}}{4nN}
\]
\[
.\left(\frac{AB\sqrt{B^{2}+t^{2}}+AB^{2}+(2B-A)t^{2}+(2n-1)B\sqrt{B^{2}+t^{2}}\left(B+\sqrt{B^{2}+t^{2}}\right)-t^{2}\sqrt{B^{2}+t^{2}}}{\left(B+\sqrt{B^{2}+t^{2}}\right)(B^{2}+t^{2})\left(AB+t^{2}+A\sqrt{B^{2}+t^{2}}\right)^{\frac{1}{2n}}}\right)
\]
\[
+\frac{|x_{j}|\left(B^{2}+t^{2}\right)^{\frac{1}{2n}}}{4nN}\left(\frac{2B(A-B)|t|+A|t|\sqrt{B^{2}+t^{2}}+2n|t|\frac{2B^{3}+2t^{2}B+2B^{2}\sqrt{B^{2}+t^{2}}+t^{2}\sqrt{B^{2}+t^{2}}}{\left(B+\sqrt{B^{2}+t^{2}}\right)}}{\left(B+\sqrt{B^{2}+t^{2}}\right)(B^{2}+t^{2})\left(AB+t^{2}+A\sqrt{B^{2}+t^{2}}\right)^{\frac{1}{2n}}}\right)
\]

\[
=\frac{|x_{j}|\left(B^{2}+t^{2}\right)^{\frac{1}{2n}}}{4nN}
\]
\[
.\left(\frac{AB\sqrt{B^{2}+t^{2}}+A|t|\sqrt{B^{2}+t^{2}}+AB^{2}+2AB|t|-At^{2}-B^{2}\sqrt{B^{2}+t^{2}}-t^{2}\sqrt{B^{2}+t^{2}}-2B^{2}|t|-B^{3}+Bt^{2}}{\left(B+\sqrt{B^{2}+t^{2}}\right)(B^{2}+t^{2})\left(AB+t^{2}+A\sqrt{B^{2}+t^{2}}\right)^{\frac{1}{2n}}}\right)
\]
\[
+\frac{|x_{j}|\left(B^{2}+t^{2}\right)^{\frac{1}{2n}}}{4nN}\left(\frac{n\left(2B^{2}\sqrt{B^{2}+t^{2}}+2B|t|\sqrt{B^{2}+t^{2}}+2B^{2}|t|+2B^{3}+2Bt^{2}+2|t|^{3}\right)}{\left(B+\sqrt{B^{2}+t^{2}}\right)(B^{2}+t^{2})\left(AB+t^{2}+A\sqrt{B^{2}+t^{2}}\right)^{\frac{1}{2n}}}\right)
\]

since $A|t|\sqrt{B^{2}+t^{2}}\geq At^{2}$, $AB\sqrt{B^{2}+t^{2}}\geq B^{2}\sqrt{B^{2}+t^{2}},$
$AB^{2}\geq B^{3},$ and $2AB|t|\geq2B^{2}|t|,$

\[
\geq\frac{|x_{j}|\left(B^{2}+t^{2}\right)^{\frac{1}{2n}}}{4nN}\left(\frac{-t^{2}\sqrt{B^{2}+t^{2}}+Bt^{2}+2n\left(B^{2}\sqrt{B^{2}+t^{2}}+B|t|\sqrt{B^{2}+t^{2}}+B^{2}t+B^{3}+t^{2}(B+|t|)\right)}{\left(B+\sqrt{B^{2}+t^{2}}\right)(B^{2}+t^{2})\left(AB+t^{2}+A\sqrt{B^{2}+t^{2}}\right)^{\frac{1}{2n}}}\right)
\]
since $B+|t|\geq\sqrt{B^{2}+t^{2}},$
\[
\geq\frac{|x_{j}|(2n-1)\left(B^{2}+t^{2}\right)^{\frac{1}{2n}}}{4nN}\left(\frac{B\sqrt{B^{2}+t^{2}}(B+|t|)+B^{2}(|t|+B)+t^{2}\sqrt{B^{2}+t^{2}}}{\left(B+\sqrt{B^{2}+t^{2}}\right)(B^{2}+t^{2})\left(AB+t^{2}+A\sqrt{B^{2}+t^{2}}\right)^{\frac{1}{2n}}}\right)
\]
since $B+|t|\geq\sqrt{B^{2}+t^{2}},$
\[
\geq\frac{|x_{j}|(2n-1)\left(B^{2}+t^{2}\right)^{\frac{1}{2n}}}{4nN}\left(\frac{B^{2}+t^{2}}{AB+t^{2}+A\sqrt{B^{2}+t^{2}}}\right)^{\frac{1}{2n}}.
\]
Since $B^{2}+t^{2}\geq{\displaystyle \frac{1}{4}}\left(AB+t^{2}+A\sqrt{B^{2}+t^{2}}\right),$
then,
\begin{equation}
|\partial_{x_{j}}N|+|x_{j}||\partial_{t}N|\geq(2n-1)\frac{|x_{j}|}{2^{\frac{1}{n}+2}nN}.\label{eq:18}
\end{equation}
For $j=1,n+1,$ using the calculations (\ref{eq:9-1}) and (\ref{eq:17}),
\[
|\partial_{x_{j}}N|+\frac{|x_{j}|}{2}|\partial_{t}N|
\]
\[
=\frac{|x_{j}|\left(B^{2}+t^{2}\right)^{\frac{1}{2n}}}{4nN}
\]
\[
.\left(\frac{\frac{1}{2}B^{2}A+(B-\frac{A}{2})t^{2}+\frac{1}{2}\sqrt{B^{2}+t^{2}}AB+(n-1)(B^{2}+t^{2})\left(B+\sqrt{B^{2}+t^{2}}\right)+nB\sqrt{B^{2}+t^{2}}\left(B+\sqrt{B^{2}+t^{2}}\right)}{\left(B+\sqrt{B^{2}+t^{2}}\right)(B^{2}+t^{2})\left(AB+t^{2}+A\sqrt{B^{2}+t^{2}}\right)^{\frac{1}{2n}}}\right)
\]
\[
+\frac{|x_{j}|\left(B^{2}+t^{2}\right)^{\frac{1}{2n}}}{4nN}\left(\frac{B(A-B)|t|+\frac{1}{2}A|t|\sqrt{B^{2}+t^{2}}+n|t|\frac{2B^{3}+2t^{2}B+2B^{2}\sqrt{B^{2}+t^{2}}+t^{2}\sqrt{B^{2}+t^{2}}}{\left(B+\sqrt{B^{2}+t^{2}}\right)}}{\left(B+\sqrt{B^{2}+t^{2}}\right)(B^{2}+t^{2})\left(AB+t^{2}+A\sqrt{B^{2}+t^{2}}\right)^{\frac{1}{2n}}}\right)
\]

\[
=\frac{|x_{j}|\left(B^{2}+t^{2}\right)^{\frac{1}{2n}}}{4nN}
\]
\[
.\left(\frac{\frac{1}{2}B^{2}A+(B-\frac{A}{2})t^{2}+\frac{1}{2}\sqrt{B^{2}+t^{2}}AB+(n-1)(B^{2}+t^{2})\left(B+\sqrt{B^{2}+t^{2}}\right)+nB\sqrt{B^{2}+t^{2}}\left(B+\sqrt{B^{2}+t^{2}}\right)}{\left(B+\sqrt{B^{2}+t^{2}}\right)(B^{2}+t^{2})\left(AB+t^{2}+A\sqrt{B^{2}+t^{2}}\right)^{\frac{1}{2n}}}\right)
\]
\[
+\frac{|x_{j}|\left(B^{2}+t^{2}\right)^{\frac{1}{2n}}}{4nN}\left(\frac{B(A-B)|t|+\frac{1}{2}A|t|\sqrt{B^{2}+t^{2}}+n|t|\frac{2B^{3}+2t^{2}B+2B^{2}\sqrt{B^{2}+t^{2}}+t^{2}\sqrt{B^{2}+t^{2}}}{\left(B+\sqrt{B^{2}+t^{2}}\right)}}{\left(B+\sqrt{B^{2}+t^{2}}\right)(B^{2}+t^{2})\left(AB+t^{2}+A\sqrt{B^{2}+t^{2}}\right)^{\frac{1}{2n}}}\right)
\]

\[
=\frac{|x_{j}|\left(B^{2}+t^{2}\right)^{\frac{1}{2n}}}{4nN}
\]
\[
.\left(\frac{\frac{1}{2}AB\sqrt{B^{2}+t^{2}}+\frac{1}{2}A|t|\sqrt{B^{2}+t^{2}}+\frac{AB^{2}}{2}+AB|t|-\frac{At^{2}}{2}-B^{2}\sqrt{B^{2}+t^{2}}-t^{2}\sqrt{B^{2}+t^{2}}-B^{2}|t|-B^{3}}{\left(B+\sqrt{B^{2}+t^{2}}\right)(B^{2}+t^{2})\left(AB+t^{2}+A\sqrt{B^{2}+t^{2}}\right)^{\frac{1}{2n}}}\right)
\]
\[
+\frac{|x_{j}|\left(B^{2}+t^{2}\right)^{\frac{1}{2n}}}{4nN}\left(\frac{n\left(2B^{2}\sqrt{B^{2}+t^{2}}+B|t|\sqrt{B^{2}+t^{2}}+t^{2}\sqrt{B^{2}+t^{2}}+B^{2}|t|+2B^{3}+2Bt^{2}+|t|^{3}\right)}{\left(B+\sqrt{B^{2}+t^{2}}\right)(B^{2}+t^{2})\left(AB+t^{2}+A\sqrt{B^{2}+t^{2}}\right)^{\frac{1}{2n}}}\right)
\]

since $\frac{1}{2}A|t|\sqrt{B^{2}+t^{2}}\geq\frac{At^{2}}{2},$ $AB|t|\geq B^{2}|t|,$
and $\frac{1}{2}AB\sqrt{B^{2}+t^{2}}+\frac{AB^{2}}{2}\geq B^{3},$
\[
\geq\frac{|x_{j}|\left(B^{2}+t^{2}\right)^{\frac{1}{2n}}}{4nN}\left(\frac{-B^{2}\sqrt{B^{2}+t^{2}}-t^{2}\sqrt{B^{2}+t^{2}}}{\left(B+\sqrt{B^{2}+t^{2}}\right)(B^{2}+t^{2})\left(AB+t^{2}+A\sqrt{B^{2}+t^{2}}\right)^{\frac{1}{2n}}}\right)
\]
\[
+\frac{|x_{j}|\left(B^{2}+t^{2}\right)^{\frac{1}{2n}}}{4nN}\left(\frac{n\left(2B^{2}\sqrt{B^{2}+t^{2}}+B|t|\sqrt{B^{2}+t^{2}}+t^{2}\sqrt{B^{2}+t^{2}}+B^{2}|t|+2B^{3}+2Bt^{2}+|t|^{3}\right)}{\left(B+\sqrt{B^{2}+t^{2}}\right)(B^{2}+t^{2})\left(AB+t^{2}+A\sqrt{B^{2}+t^{2}}\right)^{\frac{1}{2n}}}\right)
\]
\[
=\frac{|x_{j}|\left(B^{2}+t^{2}\right)^{\frac{1}{2n}}}{4nN}\left(\frac{(2n-1)B^{2}\sqrt{B^{2}+t^{2}}+(n-1)t^{2}\sqrt{B^{2}+t^{2}}+n\left(B|t|\sqrt{B^{2}+t^{2}}+(B^{2}+t^{2})(B+|t|+B)\right)}{\left(B+\sqrt{B^{2}+t^{2}}\right)(B^{2}+t^{2})\left(AB+t^{2}+A\sqrt{B^{2}+t^{2}}\right)^{\frac{1}{2n}}}\right)
\]
\[
\geq\frac{|x_{j}|\left(B^{2}+t^{2}\right)^{\frac{1}{2n}}}{4N}\left(\frac{(B^{2}+t^{2})(B+|t|+B)}{\left(B+\sqrt{B^{2}+t^{2}}\right)(B^{2}+t^{2})\left(AB+t^{2}+A\sqrt{B^{2}+t^{2}}\right)^{\frac{1}{2n}}}\right)
\]
since $|t|+B\geq\sqrt{B^{2}+t^{2}},$
\[
\geq\frac{|x_{j}|}{4N}\left(\frac{B^{2}+t^{2}}{AB+t^{2}+A\sqrt{B^{2}+t^{2}}}\right)^{\frac{1}{2n}}
\]
Since $B^{2}+t^{2}\geq{\displaystyle \frac{1}{4}}\left(AB+t^{2}+A\sqrt{B^{2}+t^{2}}\right),$
then for $j=1,n+1$,
\begin{equation}
|\partial_{x_{j}}N|+\frac{|x_{j}|}{2}|\partial_{t}N|\geq\frac{|x_{j}|}{2^{2+\frac{1}{n}}N}.\label{eq:19}
\end{equation}
Replacing (\ref{eq:18}) and (\ref{eq:19}) in (\ref{eq:16}) we get
a lower bound for $|\triangledown N|^{2}:$
\[
|\triangledown N|^{2}\geq\frac{1}{2}\left(|\partial_{x_{1}}N|+\frac{|x_{1}|}{2}|\partial_{t}N|\right)^{2}+\frac{1}{2}\left(|\partial_{x_{n+1}}N|+\frac{|x_{n+1}|}{2}|\partial_{t}N|\right)^{2}+\frac{1}{2}\sum_{j=2,j\not=n+1}^{2n}\left(|\partial_{x_{j}}N|+|x_{j}||\partial_{t}N|\right)^{2}
\]
\[
\geq\frac{|x_{1}|^{2}}{2^{5+\frac{2}{n}}N^{2}}+\frac{|x_{n+1}|^{2}}{2^{5+\frac{2}{n}}N^{2}}+\frac{(2n-1)^{2}}{2^{5+\frac{2}{n}}n^{2}N^{2}}\sum_{j=2,j\not=n+1}^{2n}|x_{j}|^{2}.
\]
Hence, using the defined Euclidean norm $|x|={\displaystyle \left(\sum_{j=1}^{2n}x_{j}^{2}\right)^{\frac{1}{2}}},$
\[
|\triangledown N|^{2}\geq\frac{|x|^{2}}{2^{5+\frac{2}{n}}N^{2}}.
\]
The last thing to do in this subsection is to obtain an upper bound
for $|\triangledown N|^{2}.$ By (\ref{eq:15}),
\[
|\triangledown N|^{2}=\sum_{j=1}^{2n}(\partial_{x_{j}}N)^{2}+\frac{x_{1}^{2}}{4}(\partial_{t}N)^{2}+\frac{x_{n+1}^{2}}{4}(\partial_{t}N)^{2}+\sum_{j=2,j\not=n+1}^{2n}x_{j}^{2}(\partial_{t}N)^{2}
\]
\begin{equation}
\leq\sum_{j=1}^{2n}(\partial_{x_{j}}N)^{2}+|x|^{2}(\partial_{t}N)^{2}.\label{eq:21}
\end{equation}
For $j=2,..,n,n+2,...,2n,$ from (\ref{eq:13-1}) and the triangle
inequality,
\[
|\partial_{x_{j}}N|=\frac{|x_{j}|\left(B^{2}+t^{2}\right)^{\frac{1}{2n}}}{4nN}
\]
\[
.\left(\frac{AB\sqrt{B^{2}+t^{2}}+AB^{2}+(2B-A)t^{2}+(2n-1)B\sqrt{B^{2}+t^{2}}\left(B+\sqrt{B^{2}+t^{2}}\right)+t^{2}\sqrt{B^{2}+t^{2}}}{\left(B+\sqrt{B^{2}+t^{2}}\right)(B^{2}+t^{2})\left(AB+t^{2}+A\sqrt{B^{2}+t^{2}}\right)^{\frac{1}{2n}}}\right)
\]
since $\left(B^{2}+t^{2}\right)\leq\left(AB+t^{2}+A\sqrt{B^{2}+t^{2}}\right),$
\[
\leq\frac{|x_{j}|}{4nN}\left(\frac{(2n-1)B^{3}+(2n-1)B^{2}\sqrt{B^{2}+t^{2}}+(2n+1)Bt^{2}+AB\sqrt{B^{2}+t^{2}}+AB^{2}-At^{2}+t^{2}\sqrt{B^{2}+t^{2}}}{B^{3}+B^{2}\sqrt{B^{2}+t^{2}}+Bt^{2}+t^{2}\sqrt{B^{2}+t^{2}}}\right)
\]
since $B\leq A\leq2B,$
\[
\leq\frac{|x_{j}|}{4nN}\left(\frac{(2n+1)B^{3}+(2n+1)B^{2}\sqrt{B^{2}+t^{2}}+(2n)Bt^{2}+t^{2}\sqrt{B^{2}+t^{2}}}{B^{3}+B^{2}\sqrt{B^{2}+t^{2}}+Bt^{2}+t^{2}\sqrt{B^{2}+t^{2}}}\right).
\]
So, 
\begin{equation}
|\partial_{x_{j}}N|\leq\frac{(2n+1)|x_{j}|}{4nN}.\label{eq:22}
\end{equation}
From (\ref{eq:17}),
\[
|\partial_{t}N|=\frac{|t|\left(B^{2}+t^{2}\right)^{\frac{1}{2n}}}{4nN}\left(\frac{2B(A-B)+A\sqrt{B^{2}+t^{2}}+2n\frac{2B^{3}+2t^{2}B+2B^{2}\sqrt{B^{2}+t^{2}}+t^{2}\sqrt{B^{2}+t^{2}}}{\left(B+\sqrt{B^{2}+t^{2}}\right)}}{\left(B+\sqrt{B^{2}+t^{2}}\right)(B^{2}+t^{2})\left(AB+t^{2}+A\sqrt{B^{2}+t^{2}}\right)^{\frac{1}{2n}}}\right)
\]
since $\left(B^{2}+t^{2}\right)\leq\left(AB+t^{2}+A\sqrt{B^{2}+t^{2}}\right),$
\[
\leq\frac{|t|}{4nN}\left(\frac{2B(A-B)+A\sqrt{B^{2}+t^{2}}+2n\frac{2B^{3}+2t^{2}B+2B^{2}\sqrt{B^{2}+t^{2}}+t^{2}\sqrt{B^{2}+t^{2}}}{\left(B+\sqrt{B^{2}+t^{2}}\right)}}{\left(B+\sqrt{B^{2}+t^{2}}\right)(B^{2}+t^{2})}\right)
\]
since $|t|\leq\sqrt{B^{2}+t^{2}},$
\[
\leq\frac{1}{4nN}\left(\frac{\left(B+\sqrt{B^{2}+t^{2}}\right)\left(2B(A-B)+A\sqrt{B^{2}+t^{2}}\right)+2n(2B^{3}+2t^{2}B+2B^{2}\sqrt{B^{2}+t^{2}}+t^{2}\sqrt{B^{2}+t^{2}})}{\left(B+\sqrt{B^{2}+t^{2}}\right)^{2}\sqrt{B^{2}+t^{2}}}\right)
\]
\[
=\frac{1}{4nN}\left(\frac{3B^{2}A-2B^{3}+3AB\sqrt{B^{2}+t^{2}}-2B^{2}\sqrt{B^{2}+t^{2}}+At^{2}+2n\left(2B^{3}+2t^{2}B+2B^{2}\sqrt{B^{2}+t^{2}}+t^{2}\sqrt{B^{2}+t^{2}}\right)}{2B^{3}+2t^{2}B+2B^{2}\sqrt{B^{2}+t^{2}}+t^{2}\sqrt{B^{2}+t^{2}}}\right)
\]
since $A\leq{\displaystyle \frac{B}{2}}$
\[
\leq\frac{1}{4nN}\left(\frac{-\frac{1}{2}B^{3}-\frac{1}{2}B^{2}\sqrt{B^{2}+t^{2}}+At^{2}+2n\left(2B^{3}+2t^{2}B+2B^{2}\sqrt{B^{2}+t^{2}}+t^{2}\sqrt{B^{2}+t^{2}}\right)}{2B^{3}+2t^{2}B+2B^{2}\sqrt{B^{2}+t^{2}}+t^{2}\sqrt{B^{2}+t^{2}}}\right).
\]
Hence,
\begin{equation}
|\partial_{t}N|\leq\frac{(2n+1)}{4nN}.\label{eq:23}
\end{equation}
Inserting (\ref{eq:12}), (\ref{eq:22}), and (\ref{eq:23}) in (\ref{eq:21}),
we get: 
\[
|\triangledown N|^{2}\leq\sum_{j=1}^{2n}(\partial_{x_{j}}N)^{2}+|x|^{2}(\partial_{t}N)^{2}
\]
\[
\leq\frac{|x_{1}|^{2}}{4N^{2}}+\frac{|x_{n+1}|^{2}}{4N^{2}}+\sum_{j=2,j\not=n+1}^{2n}\frac{(2n+1)^{2}|x_{j}|^{2}}{2^{4}n^{2}N^{2}}+|x|^{2}\frac{(2n+1)^{2}}{2^{4}n^{2}N^{2}}.
\]
So,
\[
|\triangledown N|^{2}\leq\frac{(2n+1)^{2}|x|^{2}}{2^{3}n^{2}N^{2}}.
\]
\end{proof}

\section{U-Bound }

The following U-Bound will be used in section 4 to prove the $q-$Poincaré
inequality for $q\ensuremath{\geq2}$ for the measure
\[
d\mu=\frac{e^{-g\left(N\right)}}{Z}d\lambda
\]
under the condition that ${\displaystyle \frac{g'\left(N\right)}{N^{2}}}$
is an increasing function on the anisotropic Heisenberg group where
$n>5.$ In addition, in section 4, we will prove a $\beta-$Logarithmic
Sobolev inequality for ${\displaystyle d\mu=\frac{e^{-\alpha N^{p}}}{Z}d\lambda,}$
for $p\geq4,$ $q\geq2,$ and $0<\beta\leq\frac{p-3}{p}.$
\begin{thm}
Let $N^{-2n}$ be the fundamental solution in the setting of the anisotropic
Heisenberg group $\mathbb{R}^{2n+1}$ with $n>5.$ Let $g:\left[0,\infty\right)\rightarrow\left[0,\infty\right)$
be a differentiable increasing function such that $g''(N)\leq g'(N)^{2}$
on $\{N\geq1\}.$ Let ${\displaystyle d\mu=\frac{e^{-g\left(N\right)}}{Z}d\lambda}$
be a probability measure and $Z$ the normalization constant. Then,
for $q\ensuremath{\geq2,}$ 
\[
\int\frac{g'\left(N\right)}{N^{2}}\vert f\vert^{q}d\mu\leq C\int\vert\triangledown f\vert^{q}d\mu+D\int\vert f\vert^{q}d\mu
\]
holds outside the unit ball $\left\{ N<1\right\} $ with $C$ and
$D$ positive constants independent of a function $f$ for which the
right hand side is well defined. 
\end{thm}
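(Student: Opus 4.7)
My plan is to apply integration by parts with a horizontal vector field chosen so that the target weight $g'(N)/N^{2}$ emerges on the left-hand side, and then to control the residual terms using the pointwise bounds of Lemma 2 together with Young's inequality. The natural choice is $W = x/N^{3}$, interpreted as the horizontal field $W = \sum_{j=1}^{2n}(x_{j}/N^{3})X_{j}$. Since each $X_{j}$ has vanishing Lebesgue divergence, integrating $\sum_{j}X_{j}\bigl(W_{j}|f|^{q}e^{-g(N)}\bigr)$ yields
\[
\int g'(N)\,\frac{x\cdot\triangledown N}{N^{3}}|f|^{q}\,d\mu = \int\Bigl(\frac{2n}{N^{3}}-\frac{3(x\cdot\triangledown N)}{N^{4}}\Bigr)|f|^{q}\,d\mu + q\int|f|^{q-2}f\,\frac{x\cdot\triangledown f}{N^{3}}\,d\mu.
\]

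To expose the weight $g'(N)/N^{2}$ I would substitute the Euler-type identity $x\cdot\triangledown N = N - 2t\,\partial_{t}N$, which follows from degree-one homogeneity of $N$ under the anisotropic dilation $\delta_{\lambda}(x,t)=(\lambda x,\lambda^{2}t)$ combined with the cancellation of the $\partial_{t}$-coefficients in $\sum_{j}x_{j}X_{j}$ specific to step-two Carnot groups. Substituting on both sides and regrouping gives
\[
\int\frac{g'(N)}{N^{2}}|f|^{q}\,d\mu = (2n-3)\!\int\!\frac{|f|^{q}}{N^{3}}\,d\mu + 2\!\int\!\frac{g'(N)\,t\partial_{t}N}{N^{3}}|f|^{q}\,d\mu + 6\!\int\!\frac{t\partial_{t}N}{N^{4}}|f|^{q}\,d\mu + q\!\int\!|f|^{q-2}f\,\frac{x\cdot\triangledown f}{N^{3}}\,d\mu.
\]

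The main obstacle is the term $2\int g'(N)t\partial_{t}N/N^{3}|f|^{q}\,d\mu$. From the explicit formula for $\partial_{t}N$ derived in Section 2, $t\partial_{t}N \geq 0$, and combined with $|t|\leq N^{2}$ and the estimate $|\partial_{t}N|\leq(2n+1)/(4nN)$ from Lemma 2, the naive pointwise bound produces a coefficient essentially equal to one on the $t$-axis, so direct absorption into the left-hand side fails. My plan is to eliminate this term by a second integration by parts in the $t$-direction, starting from $\int\partial_{t}\bigl(t|f|^{q}e^{-g(N)}/N^{3}\bigr)\,dx\,dt = 0$, using the hypothesis $g''(N)\leq g'(N)^{2}$ on $\{N\geq 1\}$ to control $\partial_{t}g'(N) = g''(N)\partial_{t}N$. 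This produces the cancellation of the $t\partial_{t}N/N^{3}$ and $t\partial_{t}N/N^{4}$ contributions in exchange for a $\partial_{t}f$-term, which I then rewrite via the commutator $\partial_{t} = [X_{1},X_{n+1}]$ and a further horizontal integration by parts; the mixed quadratic $(X_{1}f)(X_{n+1}f)$ terms cancel antisymmetrically. The dimension hypothesis $n > 5$ enters here to guarantee that the remaining prefactors of the form $(2n-k)$ stay positive and large enough to dominate the negative contribution coming from the lower bound $x\cdot\triangledown N \geq -|x|^{2}/(4nN)$ of Lemma 2.

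Finally, on $\{N\geq 1\}$ one has $1/N^{3}\leq 1$ and, using the norm comparison $|x|\leq C_{0}N$ (which follows from Lemma 2 and the scaling of $N$), $|x|/N^{3}\leq C_{0}/N^{2}\leq C_{0}$. Young's inequality with conjugate exponents $q/(q-1)$ and $q$ then bounds the residual $q\int|f|^{q-2}f(x\cdot\triangledown f)/N^{3}\,d\mu$ by $\epsilon\int|f|^{q}\,d\mu + C_{\epsilon}\int|\triangledown f|^{q}\,d\mu$; the boundary-type errors arising from the commutator integration by parts are handled in the same way. Choosing $\epsilon$ small enough for absorption delivers the U-bound with constants $C,D$ depending on $n$, $q$, and on the norm-equivalence constants in Lemma 2.
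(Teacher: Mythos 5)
Your setup is fine as far as it goes, and it is genuinely different from the paper's route: you integrate by parts against the horizontal field $\sum_j (x_j/N^3)X_j$ and use the dilation identity $x\cdot\triangledown N=N-2t\,\partial_tN$, whereas the paper starts from $\int(\triangledown N)\cdot(\triangledown f)e^{-g}d\lambda$, uses $\Delta N=(Q-1)|\triangledown N|^{2}/N$, substitutes $f\mapsto f^{2}/|x|^{2}$, and then absorbs the resulting weight $\frac{1}{N|x|^{2}}$ by a Hardy-type integration by parts combined with the coarea formula on the region $\{\alpha|x|^{2}g'(N)/N<1\}$. The problem is that your argument stalls exactly where the paper's machinery is needed. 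The term $2\int \frac{g'(N)\,t\partial_tN}{N^{3}}|f|^{q}d\mu$ is not a lower-order error: on the $t$-axis $N=|t|^{1/2}$ and $2t\partial_tN=N$, so near the axis this term is pointwise comparable to the left-hand side $\int\frac{g'(N)}{N^{2}}|f|^{q}d\mu$ with constant at least $1$, and no choice of small parameter absorbs it. Your proposed remedy does not remove it. The second integration by parts in $t$ is circular: since $|f|^{q-2}f\,\partial_tf=\frac1q\partial_t|f|^{q}$, the identity
\[
\int\partial_t\Bigl(\tfrac{t}{N^{3}}|f|^{q}e^{-g(N)}\Bigr)\,d\lambda=0
\]
is precisely the same relation between $\int\frac{|f|^{q}}{N^{3}}$, $\int\frac{t\partial_tN}{N^{4}}|f|^{q}$, $\int\frac{g'(N)t\partial_tN}{N^{3}}|f|^{q}$ and the $\partial_tf$-term, so it only trades the bad term for $q\int\frac{t}{N^{3}}|f|^{q-2}f\,\partial_tf\,e^{-g}d\lambda$, which is not controlled by the horizontal gradient.

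The only genuinely new step you propose for that vertical term is $\partial_t=[X_1,X_{n+1}]$ followed by horizontal integration by parts, but the antisymmetric cancellation only removes the mixed products $(X_1f)(X_{n+1}f)$; it does not touch the terms generated when a horizontal derivative falls on the density $e^{-g(N)}$. Those contribute quantities of size $\frac{|t|}{N^{3}}\,g'(N)\,|X_iN|\,|f|^{q-1}|X_jf|\lesssim \frac{g'(N)|x|}{N^{2}}|f|^{q-1}|\triangledown f|$, and Young's inequality then produces a weight $\bigl(g'(N)|x|/N^{2}\bigr)^{q/(q-1)}$ on $|f|^{q}$, i.e.\ a power of $g'$ strictly larger than one; already for $g(N)=N^{k}$ and $|x|\sim N$ this is not dominated by $\frac{g'(N)}{N^{2}}|f|^{q}+|f|^{q}+|\triangledown f|^{q}$, so absorption fails. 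Relatedly, the hypotheses $g''\le g'^{2}$ and $n>5$ appear in your sketch only decoratively ($n>5$ is not needed for $2n-3>0$, and $g''$ never enters a quantified estimate), whereas in the paper they are exactly what makes the Hardy/coarea absorption of the singular weight near the $t$-axis work (the condition $g''\le g'^{2}$ controls the boundary term on $\{1<\alpha|x|^{2}g'(N)/N<2\}$, and $n>5$ makes the final numerical inequality in $\alpha$ solvable). In short, the region $\{|x|^{2}g'(N)/N\ \text{small}\}$ — where the measure's interaction with the degenerate gradient is hardest — is left uncontrolled, so the proposal as written does not prove the U-bound.
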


\begin{proof}
First, we prove the result for $q=2:$

Using integration by parts,

\[
\int\left(\triangledown N\right)\cdot\left(\triangledown f\right)e^{-g\left(N\right)}d\lambda=-\int\triangledown\left(\triangledown Ne^{-g\left(N\right)}\right)fd\lambda=-\int\Delta Nfe^{-g\left(N\right)}d\lambda+\int\vert\triangledown N\vert^{2}fg^{'}\left(N\right)e^{-g\left(N\right)}d\lambda.
\]
Since $N^{-2n}$ is the fundamental solution, we have
\[
\Delta N=\vert\triangledown N\vert^{2}\frac{\left(Q-1\right)}{N}.
\]
Hence,
\begin{equation}
\int\vert\triangledown N\vert^{2}f\left[g^{'}\left(N\right)-\frac{\left(Q-1\right)}{N}\right]e^{-g\left(N\right)}d\lambda=\int\left(\triangledown N\right)\cdot\left(\triangledown f\right)e^{-g\left(N\right)}d\lambda,\label{eq:15-1}
\end{equation}
Replacing $f$ by ${\displaystyle \frac{f^{2}}{\vert x\vert^{2}}}$
and using (\ref{eq:20-2}), the left-hand side of (\ref{eq:15-1})
becomes:
\[
\int\vert\triangledown N\vert^{2}\frac{f^{2}}{\vert x\vert^{2}}\left[g'\left(N\right)-\frac{\left(Q-1\right)}{N}\right]e^{-g\left(N\right)}d\lambda\geq\frac{1}{2^{5+\frac{2}{n}}}\int f^{2}\left[\frac{g'\left(N\right)}{N^{2}}-\frac{\left(Q-1\right)}{N^{3}}\right]e^{-g\left(N\right)}d\lambda
\]
\begin{equation}
\geq\frac{1}{2^{5+\frac{2}{n}}}\int f^{2}\left[\frac{g^{'}\left(N\right)}{N^{2}}-\left(Q-1\right)\right]e^{-g\left(N\right)}d\lambda.\label{eq:23-1}
\end{equation}
Where the last inequality is true since $N>1.$ As for the right-hand
side of (\ref{eq:15-1}),
\[
\int\left(\triangledown N\right)\cdot\left(\triangledown\left(\frac{f^{2}}{\vert x\vert^{2}}\right)\right)e^{-g\left(N\right)}d\lambda=\int\left(\triangledown N\right)\cdot\left[2f\frac{\triangledown f}{\vert x\vert^{2}}-\frac{2f^{2}\triangledown\vert x\vert}{\vert x\vert^{3}}\right]e^{-g\left(N\right)}d\lambda
\]

\begin{equation}
=\int\left(\triangledown N\right).\left[2f\frac{\triangledown f}{\vert x\vert^{2}}-\frac{2f^{2}x}{\vert x\vert^{4}}\right]e^{-g\left(N\right)}d\lambda.\label{eq:16-1}
\end{equation}
Using the bound on $\triangledown N\cdot x,$ from (\ref{eq:14-1}),
we get:
\[
\int\left(\triangledown N\right)\cdot\left(\triangledown\left(\frac{f^{2}}{\vert x\vert^{2}}\right)\right)e^{-g\left(N\right)}d\lambda=\int\frac{2f}{\vert x\vert^{2}}\triangledown N\cdot\triangledown fe^{-g\left(N\right)}d\lambda-2\int f^{2}\frac{\triangledown N\cdot x}{\vert x\vert^{4}}e^{-g\left(N\right)}d\lambda
\]
\[
\leq\int\frac{2f}{\vert x\vert^{2}}\triangledown N\cdot\triangledown fe^{-g\left(N\right)}d\lambda+\frac{1}{2n}\int\frac{f^{2}}{N\vert x\vert^{2}}e^{-g\left(N\right)}d\lambda
\]
\[
\leq2\int\frac{f}{\vert x\vert^{2}}|\triangledown N||\triangledown f|e^{-g\left(N\right)}d\lambda+\frac{1}{2n}\int\frac{f^{2}}{N\vert x\vert^{2}}e^{-g\left(N\right)}d\lambda
\]
using the bound on $|\triangledown N|,$ from (\ref{eq:24-2}), we
get
\[
\leq\frac{(2n+1)}{2^{\frac{1}{2}}n}\int\frac{|f|}{N\vert x\vert}|\triangledown f|e^{-g\left(N\right)}d\lambda+\frac{1}{2n}\int\frac{f^{2}}{N\vert x\vert^{2}}e^{-g\left(N\right)}d\lambda
\]
applying Cauchy\textquoteright s inequality with $\beta:$ ${\displaystyle ab\leq\beta a^{2}+\frac{b^{2}}{4\beta}}$
with ${\displaystyle a=\frac{\vert f\vert}{N\vert x\vert}e^{-\frac{g\left(N\right)}{2}}}$
and ${\displaystyle b=\frac{(2n+1)}{2^{\frac{1}{2}}n}\vert\triangledown f\vert e^{-\frac{g\left(N\right)}{2}},}$
\[
\leq\beta\int\frac{f^{2}}{N^{2}\vert x\vert^{2}}e^{-g\left(N\right)}d\lambda+\frac{1}{2n}\int\frac{f^{2}}{N\vert x\vert^{2}}e^{-g\left(N\right)}d\lambda+\frac{(2n+1)^{2}}{8n^{2}\beta}\int|\triangledown f|^{2}e^{-g\left(N\right)}d\lambda.
\]
Combining the last inequality with (\ref{eq:23-1}), we get 
\begin{equation}
\frac{1}{2^{5+\frac{2}{n}}}\int f^{2}\left[\frac{g^{'}\left(N\right)}{N^{2}}-\left(Q-1\right)\right]e^{-g\left(N\right)}d\lambda\leq\beta\int\frac{f^{2}}{N^{2}\vert x\vert^{2}}e^{-g\left(N\right)}d\lambda+\frac{1}{2n}\int\frac{f^{2}}{N\vert x\vert^{2}}e^{-g\left(N\right)}d\lambda\label{eq:24-1}
\end{equation}
\[
+\frac{(2n+1)^{2}}{8n^{2}\beta}\int|\triangledown f|^{2}e^{-g\left(N\right)}d\lambda.
\]
For $n$ large enough, to be determined later, we need to bound $\frac{1}{2n}\int\frac{f^{2}}{N\vert x\vert^{2}}e^{-g\left(N\right)}d\lambda$
from the right hand side of (\ref{eq:24-1}) by $\frac{1}{2^{5+\frac{2}{n}}}\int f^{2}\frac{g^{'}\left(N\right)}{N^{2}}e^{-g\left(N\right)}d\lambda$
by using Hardy's inequality (see \cite{key-3-1-1} and references
therein) and the Coarea formula (page 468 of \cite{key-11-1}). Since
$\beta$ could be chosen to be arbitrarily small and since $N>1,$
we do not worry about the term $\beta\int\frac{f^{2}}{N^{2}\vert x\vert^{2}}e^{-g\left(N\right)}d\lambda$
since it follows the same procedure as $\frac{1}{2n}\int\frac{f^{2}}{N\vert x\vert^{2}}e^{-g\left(N\right)}d\lambda.$
Let $E=\{(x,z):{\displaystyle \frac{1}{|x|^{2}}\leq\alpha\frac{g'(N)}{N}}\}$
and $F=\{(x,z):{\displaystyle \alpha\frac{|x|^{2}g'(N)}{N}<1}\}.$
\[
\frac{1}{2n}\int\frac{f^{2}}{N\vert x\vert^{2}}e^{-g\left(N\right)}d\lambda=\frac{1}{2n}\int_{F}\frac{f^{2}}{N\vert x\vert^{2}}e^{-g\left(N\right)}d\lambda+\frac{1}{2n}\int_{E}\frac{f^{2}}{N\vert x\vert^{2}}e^{-g\left(N\right)}d\lambda
\]
\begin{equation}
\leq\frac{1}{2n}\int_{F}\frac{f^{2}}{N\vert x\vert^{2}}e^{-g\left(N\right)}d\lambda+\frac{\alpha}{2n}\int_{E}\frac{f^{2}g'(N)}{N^{2}}e^{-g\left(N\right)}d\lambda.\label{eq:19-1}
\end{equation}
where (\ref{eq:19-1}) is true since $E=\{(x,z):\frac{1}{|x|^{2}}\leq\alpha\frac{g'(N)}{N}\}.$
$\alpha$ is to be chosen later. The aim now is to estimate the first
term of (\ref{eq:19-1}). Consider $F_{r}=\left\{ \alpha\frac{|x|^{2}g'(N)}{N}<r\right\} ,$
where $1<r<2.$ Integrating by parts:
\[
\frac{1}{2n}\int_{F_{r}}\frac{\vert fe^{\frac{-g(N)}{2}}\vert^{2}}{N|x|^{2}}d\lambda=\left(\frac{1}{2n}\right)\frac{1}{2n-2}\int_{F_{r}}\frac{\vert fe^{\frac{-g(N)}{2}}\vert^{2}}{N^{2}}\nabla\left(\frac{x}{|x|^{2}}\right)d\lambda
\]
\[
=-\left(\frac{1}{2n}\right)\frac{1}{2n-2}\int_{F_{r}}\nabla\left(\left\vert \frac{fe^{\frac{-g(N)}{2}}}{N^{\frac{1}{2}}}\right\vert ^{2}\right)\cdot\frac{x}{|x|^{2}}d\lambda
\]
\[
+\left(\frac{1}{2n}\right)\frac{1}{2n-2}\int_{\partial F_{r}}\frac{f^{2}e^{-g(N)}}{N|x|^{2}}\sum_{j=1}^{2n}\frac{x_{j}<X_{j}I,\nabla_{euc}\left(\alpha\frac{|x|^{2}g'(N)}{N}\right)>}{\left|\nabla_{euc}\left(\alpha\frac{|x|^{2}g'(N)}{N}\right)\right|}dH^{2n}
\]
\[
=-\left(\frac{1}{2n}\right)\frac{1}{n-1}\int_{F_{r}}\frac{fe^{\frac{-g(N)}{2}}}{N^{\frac{1}{2}}}\nabla\left(\frac{fe^{\frac{-g(N)}{2}}}{N^{\frac{1}{2}}}\right)\cdot\frac{x}{|x|^{2}}d\lambda
\]
\[
+\left(\frac{1}{2n}\right)\frac{1}{2n-2}\int_{\partial F_{r}}\frac{f^{2}e^{-g(N)}}{N|x|^{2}}\sum_{j=1}^{2n}\frac{x_{j}<X_{j}I,\nabla_{euc}\left(\alpha\frac{|x|^{2}g'(N)}{N}\right)>}{\left|\nabla_{euc}\left(\alpha\frac{|x|^{2}g'(N)}{N}\right)\right|}dH^{2n}
\]
\[
\leq\frac{1}{4n}\int_{F_{r}}\frac{\vert fe^{\frac{-g(N)}{2}}\vert^{2}}{N|x|^{2}}d\lambda+\frac{1}{4n(n-1)^{2}}\int_{F_{r}}\left|\nabla\left(\frac{fe^{\frac{-g(N)}{2}}}{N^{\frac{1}{2}}}\right)\right|^{2}d\lambda
\]
\[
+\frac{1}{4n(n-1)}\int_{\partial F_{r}}\frac{f^{2}e^{-g(N)}}{N|x|^{2}}\sum_{j=1}^{2n}\frac{x_{j}<X_{j}I,\nabla_{euc}\left(\alpha\frac{|x|^{2}g'(N)}{N}\right)>}{\left|\nabla_{euc}\left(\alpha\frac{|x|^{2}g'(N)}{N}\right)\right|}dH^{2n}
\]
Where the last step uses Cauchy's inequality. Subtracting on both
sides of the last inequality by ${\displaystyle \frac{1}{4n}\int_{F_{r}}\frac{\vert fe^{\frac{-g(N)}{2}}\vert^{2}}{N|x|^{2}}d\lambda,}$
and using the fact that $1<r<2,$ we get:
\[
\left(\frac{1}{2n}\right)\int_{F_{1}}\frac{\vert fe^{\frac{-g(N)}{2}}\vert^{2}}{N|x|^{2}}d\lambda\leq\left(\frac{1}{2n}\right)\int_{F_{r}}\frac{\vert fe^{\frac{-g(N)}{2}}\vert^{2}}{N|x|^{2}}d\lambda
\]
\[
\leq\left(\frac{1}{2n}\right)\frac{1}{(n-1)^{2}}\int_{F_{r}}\left|\nabla\left(\frac{fe^{\frac{-g(N)}{2}}}{N^{\frac{1}{2}}}\right)\right|^{2}d\lambda+\left(\frac{1}{2n}\right)\frac{1}{n-1}\int_{\partial F_{r}}\frac{f^{2}e^{-g(N)}}{N|x|^{2}}\sum_{j=1}^{2n}\frac{x_{j}<X_{j}I,\nabla_{euc}\left(\alpha\frac{|x|^{2}g'(N)}{N}\right)>}{\left|\nabla_{euc}\left(\alpha\frac{|x|^{2}g'(N)}{N}\right)\right|}dH^{2n}
\]
\[
\leq\left(\frac{1}{2n}\right)\frac{1}{(n-1)^{2}}\int_{F_{2}}\left|\nabla\left(\frac{fe^{\frac{-g(N)}{2}}}{N^{\frac{1}{2}}}\right)\right|^{2}d\lambda+\left(\frac{1}{2n}\right)\frac{1}{n-1}\int_{\partial F_{r}}\frac{f^{2}e^{-g(N)}}{N|x|^{2}}\sum_{j=1}^{2n}\frac{x_{j}<X_{j}I,\nabla_{euc}\left(\alpha\frac{|x|^{2}g'(N)}{N}\right)>}{\left|\nabla_{euc}\left(\alpha\frac{|x|^{2}g'(N)}{N}\right)\right|}dH^{2n}
\]
Integrating both sides of the inequality from $r=1$ to $r=2,$ we
get:
\[
\left(\frac{1}{2n}\right)\int_{1}^{2}\int_{F_{1}}\frac{\vert fe^{\frac{-g(N)}{2}}\vert^{2}}{N|x|^{2}}d\lambda dr
\]
\[
\leq\left(\frac{1}{2n}\right)\frac{1}{(n-1)^{2}}\int_{1}^{2}\int_{F_{2}}\left|\nabla\left(\frac{fe^{\frac{-g(N)}{2}}}{N^{\frac{1}{2}}}\right)\right|^{2}d\lambda dr
\]
\[
+\left(\frac{1}{2n}\right)\frac{1}{n-1}\int_{1}^{2}\int_{\partial F_{r}}\frac{f^{2}e^{-g(N)}}{N|x|^{2}}\sum_{j=1}^{2n}\frac{x_{j}<X_{j}I,\nabla_{euc}\left(\alpha\frac{|x|^{2}g'(N)}{N}\right)>}{\left|\nabla_{euc}\left(\alpha\frac{|x|^{2}g'(N)}{N}\right)\right|}dH^{2n}dr
\]
To recover the full measure in the boundary term, we use the Coarea
formula: 
\[
\left(\frac{1}{2n}\right)\int_{F_{1}}\frac{\vert fe^{\frac{-g(N)}{2}}\vert^{2}}{N|x|^{2}}d\lambda\leq
\]
\begin{equation}
\frac{1}{2n(n-1)^{2}}\int_{F_{2}}\left|\nabla\left(\frac{fe^{\frac{-g(N)}{2}}}{N^{\frac{1}{2}}}\right)\right|^{2}d\lambda+\frac{1}{2n(n-1)}\int_{\{1<\alpha\frac{|x|^{2}g'(N)}{N}<2\}}\frac{f^{2}e^{-g(N)}}{N|x|^{2}}\sum_{j=1}^{2n}x_{j}<X_{j}I,\nabla_{euc}\left(\alpha\frac{|x|^{2}g'(N)}{N}\right)>d\lambda\label{eq:20-1}
\end{equation}
It remains to compute the right hand side of (\ref{eq:20-1}). The
first term,
\[
A=\left(\frac{1}{2n}\right)\frac{1}{(n-1)^{2}}\int_{F_{2}}\left|\nabla\left(\frac{fe^{\frac{-g(N)}{2}}}{N^{\frac{1}{2}}}\right)\right|^{2}d\lambda
\]
\[
=\left(\frac{1}{2n}\right)\frac{1}{(n-1)^{2}}\int_{F_{2}}\left|\nabla f\frac{e^{-\frac{g(N)}{2}}}{N^{\frac{1}{2}}}-f\frac{g'(N)}{2}\frac{\nabla Ne^{-\frac{g(N)}{2}}}{N^{\frac{1}{2}}}-\frac{1}{2}\frac{f\nabla Ne^{-\frac{g(N)}{2}}}{N^{\frac{3}{2}}}\right|^{2}d\lambda
\]
\[
=\left(\frac{1}{2n}\right)\frac{1}{(n-1)^{2}}\int_{F_{2}}\frac{|\nabla f|^{2}}{N}e^{-g(N)}d\lambda+\left(\frac{1}{8n}\right)\frac{1}{(n-1)^{2}}\int_{F_{2}}f^{2}g'(N)^{2}\frac{|\nabla N|^{2}}{N}e^{-g(N)}d\lambda
\]
\[
+\left(\frac{1}{8n}\right)\frac{1}{(n-1)^{2}}\int_{F_{2}}f^{2}\frac{|\nabla N|^{2}}{N^{3}}e^{-g(N)}d\lambda-\frac{1}{2n(n-1)^{2}}\int_{F_{2}}f\nabla f\cdot\triangledown N\frac{g'(N)}{N}e^{-g(N)}d\lambda
\]
\[
-\frac{1}{2n(n-1)^{2}}\int_{F_{2}}f\nabla f\cdot\triangledown N\frac{1}{N^{2}}e^{-g(N)}d\lambda+\frac{1}{4n(n-1)^{2}}\int_{F_{2}}f^{2}g'(N)\frac{|\nabla N|^{2}}{N^{2}}e^{-g(N)}d\lambda.
\]
Using (\ref{eq:24-2}), 
\[
A\leq\left(\frac{1}{2n}\right)\frac{1}{(n-1)^{2}}\int_{F_{2}}\frac{|\nabla f|^{2}}{N}e^{-g(N)}d\lambda+\frac{(2n+1)^{2}}{2^{6}n^{3}(n-1)^{2}}\int_{F_{2}}f^{2}g'(N)^{2}\frac{|x|^{2}}{N^{3}}e^{-g(N)}d\lambda
\]
\[
+\frac{(2n+1)^{2}}{2^{6}n^{3}(n-1)^{2}}\int_{F_{2}}f^{2}\frac{|x|^{2}}{N^{5}}e^{-g(N)}d\lambda+\frac{(2n+1)}{2^{\frac{5}{2}}n^{2}(n-1)^{2}}\int_{F_{2}}|f||\nabla f|\frac{g'(N)|x|}{N^{2}}e^{-g(N)}d\lambda
\]
\[
+\frac{(2n+1)}{2^{\frac{5}{2}}n^{2}(n-1)^{2}}\int_{F_{2}}|f||\nabla f|\frac{|x|}{N^{3}}e^{-g(N)}d\lambda+\frac{(2n+1)^{2}}{2^{5}n^{3}(n-1)^{2}}\int_{F_{2}}f^{2}g'(N)\frac{|x|^{2}}{N^{4}}e^{-g(N)}d\lambda.
\]
\\
Using the fact that $F_{2}=\left\{ \alpha\frac{|x|^{2}g'(N)}{N}<2\right\} $
and that $|x|\leq C_{n}N,$
\[
A\leq\left(\frac{1}{2n}\right)\frac{1}{(n-1)^{2}}\int_{F_{2}}\frac{|\nabla f|^{2}}{N}e^{-g(N)}d\lambda+\frac{(2n+1)^{2}}{\alpha2^{5}n^{3}(n-1)^{2}}\int_{F_{2}}f^{2}\frac{g'(N)}{N^{2}}e^{-g(N)}d\lambda
\]
\[
+\frac{(2n+1)^{2}C_{n}^{2}}{2^{6}n^{3}(n-1)^{2}}\int_{F_{2}}f^{2}\frac{1}{N^{3}}e^{-g(N)}d\lambda+\frac{(2n+1)C_{n}}{2^{\frac{5}{2}}n^{2}(n-1)^{2}}\int_{F_{2}}|f||\nabla f|\frac{g'(N)}{N}e^{-g(N)}d\lambda
\]
\[
+\frac{(2n+1)C_{n}}{2^{\frac{5}{2}}n^{2}(n-1)^{2}}\int_{F_{2}}|f||\nabla f|\frac{1}{N^{2}}e^{-g(N)}d\lambda+\frac{(2n+1)^{2}}{\alpha2^{4}n^{3}(n-1)^{2}}\int_{F_{2}}f^{2}\frac{1}{N^{3}}e^{-g(N)}d\lambda.
\]
\\
Using Cauchy's inequality with $\gamma$: $ab\leq\gamma a^{2}+{\displaystyle \frac{b^{2}}{4\gamma}}$
on ${\displaystyle \frac{(2n+1)C_{n}}{2^{\frac{5}{2}}n^{2}(n-1)^{2}}\int_{F_{2}}|f||\nabla f|\frac{g'(N)}{N}e^{-g(N)}d\lambda}$
with 
\\
\\
${\displaystyle a=\frac{|f|g'(N)}{N}}$ and $b={\displaystyle \frac{(2n+1)C_{n}}{2^{\frac{5}{2}}n^{2}(n-1)^{2}}|\nabla f|}$
and Cauchy's inequality with $\gamma$ on ${\displaystyle \frac{(2n+1)C_{n}}{2^{\frac{5}{2}}n^{2}(n-1)^{2}}\int_{F_{2}}|f||\nabla f|\frac{1}{N^{2}}e^{-g(N)}d\lambda}$
\\
\\
with ${\displaystyle a=\frac{|f|}{N^{2}}}$ and $b={\displaystyle \frac{(2n+1)C_{n}}{2^{\frac{5}{2}}n^{2}(n-1)^{2}}|\nabla f|}$
in addition to $N>1,$
\[
A\leq\left(\left(\frac{1}{2n}\right)\frac{1}{(n-1)^{2}}+\frac{C_{n}^{2}(2n+1)^{2}}{\gamma2^{6}n^{4}(n-1)^{4}}\right)\int_{F_{2}}|\nabla f|^{2}e^{-g(N)}d\lambda
\]
\[
+\left(\frac{C_{n}^{2}(2n+1)^{2}}{2^{6}n^{3}(n-1)^{2}}+\frac{(2n+1)^{2}}{\alpha2^{4}n^{3}(n-1)^{2}}+\gamma\right)\int_{F_{2}}f^{2}e^{-g(N)}d\lambda+\left(\frac{(2n+1)^{2}}{\alpha2^{5}n^{3}(n-1)^{2}}+\gamma\right)\int_{F_{2}}f^{2}\frac{g'(N)}{N^{2}}e^{-g(N)}d\lambda.
\]
Hence, 
\begin{equation}
A\leq\left(\frac{(2n+1)^{2}}{\alpha2^{5}n^{3}(n-1)^{2}}+\gamma\right)\int_{F_{2}}f^{2}\frac{g'(N)}{N^{2}}e^{-g(N)}d\lambda+C\int_{F_{2}}|\nabla f|^{2}e^{-g(N)}d\lambda+D\int_{F_{2}}f^{2}e^{-g(N)}d\lambda.\label{eq:25}
\end{equation}
For the second term of (\ref{eq:20-1}), 
\[
B=\left(\frac{1}{2n}\right)\frac{1}{n-1}\int_{\{1<\alpha\frac{|x|^{2}g'(N)}{N}<2\}}\frac{f^{2}e^{-g(N)}}{N|x|^{2}}\sum_{j=1}^{2n}x_{j}<X_{j}I,\nabla_{euc}\left(\alpha\frac{|x|^{2}g'(N)}{N}\right)>d\lambda.
\]
\\
On the anisotropic Heisenberg group, for $e_{i}$ the standard Euclidean
basis on $\mathbb{R}^{2n+1},$
\\
\[
X_{j}I\cdot e_{i}=\begin{cases}
0\;\;\;\;\;\;\;\;\;\;\;\;\;\;\;\;\;\;\;\;\;\;\; & for\;\;\;i\neq j\;\;\;and\;\;\;i\leq2n\\
1\;\;\;\;\;\;\;\;\;\;\;\;\;\;\;\;\;\;\;\;\;\;\; & for\;\;\;i=j\;\;\;and\;\;\;i\leq2n\\
\sum_{l=1}^{2n}\Lambda_{jl}x_{l}\;\;\;\;\;\;\; & for\;\;\;i=2n+1,
\end{cases}
\]
where 
\[
\Lambda_{jl}=\begin{cases}
-\frac{1}{2} & j=1,\ \ l=n+1\\
\frac{1}{2} & j=n+1,\ \ l=1\\
-1 & j=2,3,...,n,\ \ l=k+n\\
1 & j=n+2,n+3,...,2n,\ \ l=k-n\\
0 & otherwise
\end{cases}
\]
Also,
\[
\nabla_{euc}\left(\alpha\frac{|x|^{2}g'(N)}{N}\right)\cdot e_{i}=\begin{cases}
{\displaystyle 2\alpha\frac{x_{i}g'(N)}{N}+\frac{\alpha|x|^{2}g''(N)\partial_{x_{i}}N}{N}-\frac{\alpha|x|^{2}g'(N)\partial_{x_{i}}N}{N^{2}}}\;\;\;\;\;\;\;\;\;\;\;\; & for\;\;\;i=j\;\;\;and\;\;\;i\leq2n\\
\\
{\displaystyle \frac{\alpha|x|^{2}g''(N)\partial_{t}N}{N}-\frac{\alpha|x|^{2}g'(N)\partial_{t}N}{N^{2}}}\;\;\;\;\;\;\;\;\;\;\;\;\;\;\;\;\;\;\;\;\;\;\;\;\;\;\;\; & for\;\;\;i=2n+1.
\end{cases}
\]
Taking the dot product and summing, 
\[
{\displaystyle \sum_{j=1}^{2n}x_{j}<X_{j}I,\nabla_{euc}\left(\alpha\frac{|x|^{2}g'(N)}{N}\right)>\;=2\alpha\frac{|x|^{2}g'(N)}{N}+\sum_{j=1}^{2n}x_{j}\frac{\alpha|x|^{2}g''(N)\partial_{x_{i}}N}{N}-\sum_{j=1}^{2n}x_{j}\frac{\alpha|x|^{2}g'(N)\partial_{x_{i}}N}{N^{2}}}
\]
\[
+\sum_{j=1}^{2n}x_{j}\left(\frac{\alpha|x|^{2}g''(N)\partial_{t}N}{N}-\frac{\alpha|x|^{2}g'(N)\partial_{t}N}{N^{2}}\right)\sum_{l=1}^{2n}\Lambda_{jl}x_{l}
\]
\[
=2\alpha\frac{|x|^{2}g'(N)}{N}+\frac{\alpha|x|^{2}g''(N)x\cdot\triangledown N}{N}-\frac{\alpha|x|^{2}g'(N)x\cdot\triangledown N}{N^{2}}+\left(\frac{\alpha|x|^{2}g''(N)\partial_{t}N}{N}-\frac{\alpha|x|^{2}g'(N)\partial_{t}N}{N^{2}}\right)\sum_{j=1}^{2n}\sum_{l=1}^{2n}\Lambda_{jl}x_{l}x_{j}
\]
\[
=2\alpha\frac{|x|^{2}g'(N)}{N}+\frac{\alpha|x|^{2}g''(N)x\cdot\triangledown N}{N}-\frac{\alpha|x|^{2}g'(N)x\cdot\triangledown N}{N^{2}},
\]
where ${\displaystyle \sum_{j=1}^{2n}\sum_{l=1}^{2n}\Lambda_{jl}x_{l}x_{j}=0}$
since ${\displaystyle \Lambda}$ is skew symmetric. Hence, using (\ref{eq:24-2}),
\[
\left|\sum_{j=1}^{2n}x_{j}<X_{j}I,\nabla_{euc}\left(\alpha\frac{|x|^{2}g'(N)}{N}\right)>\right|\leq2\alpha\frac{|x|^{2}g'(N)}{N}+\frac{\alpha|x|^{3}g''(N)|\triangledown N|}{N}+\frac{\alpha|x|^{3}g'(N)|\triangledown N|}{N^{2}}
\]
\[
\leq2\alpha\frac{|x|^{2}g'(N)}{N}+\frac{\alpha(2n+1)|x|^{4}g''(N)}{2^{\frac{3}{2}}nN^{2}}+\frac{\alpha(2n+1)|x|^{4}g'(N)}{2^{\frac{3}{2}}nN^{3}}.
\]
Therefore, replacing, 
\[
B=\left(\frac{1}{2n}\right)\frac{1}{n-1}\int_{\{1<\alpha\frac{|x|^{2}g'(N)}{N}<2\}}\frac{f^{2}e^{-g(N)}}{N|x|^{2}}\sum_{j=1}^{2n}x_{j}<X_{j}I,\nabla_{euc}\left(\alpha\frac{|x|^{2}g'(N)}{N}\right)>d\lambda
\]
\[
\leq\left(\frac{1}{n}\right)\frac{\alpha}{n-1}\int_{\{1<\alpha\frac{|x|^{2}g'(N)}{N}<2\}}\frac{f^{2}g'(N)}{N^{2}}e^{-g(N)}d\lambda+\frac{\alpha(2n+1)}{2^{\frac{5}{2}}n^{2}(n-1)}\int_{\{1<\alpha\frac{|x|^{2}g'(N)}{N}<2\}}\frac{f^{2}|x|^{2}g''(N)}{N^{3}}e^{-g(N)}d\lambda
\]
\[
+\frac{\alpha(2n+1)}{2^{\frac{5}{2}}n^{2}(n-1)}\int_{\{1<\alpha\frac{|x|^{2}g'(N)}{N}<2\}}\frac{f^{2}|x|^{2}g'(N)}{N^{4}}e^{-g(N)}d\lambda.
\]
Using the fact that we are integrating over $\{1<\alpha\frac{|x|^{2}g'(N)}{N}<2\},$
\[
B\leq\left(\frac{1}{n}\right)\frac{\alpha}{n-1}\int_{\{1<\alpha\frac{|x|^{2}g'(N)}{N}<2\}}\frac{f^{2}g'(N)}{N^{2}}e^{-g(N)}d\lambda+\frac{(2n+1)}{2^{\frac{3}{2}}n^{2}(n-1)}\int_{\{1<\alpha\frac{|x|^{2}g'(N)}{N}<2\}}\frac{f^{2}g''(N)}{N^{2}g'(N)}e^{-g(N)}d\lambda
\]
\begin{equation}
+\frac{(2n+1)}{2^{\frac{3}{2}}n^{2}(n-1)}\int_{\{1<\alpha\frac{|x|^{2}g'(N)}{N}<2\}}\frac{f^{2}}{N^{3}}e^{-g(N)}d\lambda.\label{eq:26}
\end{equation}
Using the condition of the theorem that $g''(N)\leq g'(N)^{2}$ on
$\{N\geq1\},$ we bound the second term on the right hand side of
(\ref{eq:26}). Now we go back to (\ref{eq:24-1}):
\[
\frac{1}{2^{5+\frac{2}{n}}}\int f^{2}\left[\frac{g^{'}\left(N\right)}{N^{2}}-\left(Q-1\right)\right]e^{-g\left(N\right)}d\lambda
\]
\[
\leq\beta\int\frac{f^{2}}{N^{2}\vert x\vert^{2}}e^{-g\left(N\right)}d\lambda+\frac{1}{2n}\int\frac{f^{2}}{N\vert x\vert^{2}}e^{-g\left(N\right)}d\lambda+\frac{(2n+1)^{2}}{8n^{2}\beta}\int|\triangledown f|^{2}e^{-g\left(N\right)}d\lambda.
\]
From (\ref{eq:19-1}) we have that 
\[
\frac{1}{2n}\int\frac{f^{2}}{N\vert x\vert^{2}}e^{-g\left(N\right)}d\lambda\leq\frac{1}{2n}\int_{F}\frac{f^{2}}{N\vert x\vert^{2}}e^{-g\left(N\right)}d\lambda+\frac{\alpha}{2n}\int_{E}\frac{f^{2}g'(N)}{N^{2}}e^{-g\left(N\right)}d\lambda.
\]
From (\ref{eq:25}) and (\ref{eq:26}) we have that 
\[
\frac{1}{2n}\int_{F}\frac{f^{2}}{N\vert x\vert^{2}}e^{-g\left(N\right)}d\lambda
\]
\[
\leq\left(\frac{(2n+1)^{2}}{\alpha2^{5}n^{3}(n-1)^{2}}+\gamma+\frac{\alpha}{n(n-1)}\right)\int_{F_{2}}f^{2}\frac{g'(N)}{N^{2}}e^{-g(N)}d\lambda+C\int_{F_{2}}|\nabla f|^{2}e^{-g(N)}d\lambda+D\int_{F_{2}}f^{2}e^{-g(N)}d\lambda.
\]
We combine the last three inequalities and repeat the same procedure
to $\beta\int\frac{f^{2}}{N^{2}\vert x\vert^{2}}e^{-g\left(N\right)}d\lambda$
to get:
\begin{equation}
\left(\frac{1}{2^{5+\frac{2}{n}}}-\frac{\alpha}{2n}-\frac{(2n+1)^{2}}{\alpha2^{5}n^{3}(n-1)^{2}}-\frac{\alpha}{n(n-1)}-\gamma-\beta\right)\int f^{2}\frac{g^{'}\left(N\right)}{N^{2}}e^{-g\left(N\right)}d\lambda\label{eq:26-1}
\end{equation}
\[
\leq C\int_{F_{2}}|\nabla f|^{2}e^{-g(N)}d\lambda+D\int_{F_{2}}f^{2}e^{-g(N)}d\lambda.
\]
To get the U-Bound, we need the left hand side of (\ref{eq:26-1})
to be positive i.e. and find suitable $\alpha,n,$ $\gamma,$ and
$\beta.$ Since $\gamma$ and $\beta$ can be chosen to be arbitrarily
small, we need to find solutions to the following inequality:
\begin{equation}
\frac{1}{2^{5+\frac{2}{n}}}-\frac{\alpha}{2n}-\frac{(2n+1)^{2}}{\alpha2^{5}n^{3}(n-1)^{2}}-\frac{\alpha}{n(n-1)}>0.\label{eq:27}
\end{equation}
First, we determine $\alpha:$ Let $f(\alpha)={\displaystyle \frac{1}{2^{5+\frac{2}{n}}}-\frac{\alpha}{2n}-\frac{(2n+1)^{2}}{\alpha2^{5}n^{3}(n-1)^{2}}-\frac{\alpha}{n(n-1)}.}$
\[
f'(\alpha)=-\frac{1}{2n}+\frac{(2n+1)^{2}}{\alpha^{2}2^{5}n^{3}(n-1)^{2}}-\frac{1}{n(n-1)}.
\]
Solving for $f'(\alpha)=0,$
\[
\alpha={\displaystyle \frac{2n+1}{2^{2}n(n^{2}-1)^{\frac{1}{2}}}}.
\]
Replacing $\alpha$ in (\ref{eq:27}),
\[
\frac{1}{2^{5+\frac{2}{n}}}-\frac{(2n+1)}{2^{3}n^{2}(n^{2}-1)^{\frac{1}{2}}}-\frac{(n^{2}-1)^{\frac{1}{2}}(2n+1)}{2^{3}n^{2}(n-1)^{2}}-\frac{(2n+1)}{2^{2}n^{2}(n-1)(n^{2}-1)^{\frac{1}{2}}}>0
\]
\[
1>2^{3+\frac{2}{n}}\left(\frac{2n+1}{n^{2}}\right)\left(\frac{(n+1)^{\frac{1}{2}}}{(n-1)^{\frac{3}{2}}}\right).
\]
This holds true if we choose $n>5.$ Hence, we get:
\[
\int f^{2}\left(\frac{g^{'}\left(N\right)}{N^{2}}\right)e^{-g\left(N\right)}d\lambda\leq C\int|\nabla f|^{2}e^{-g(N)}d\lambda+D\int f^{2}e^{-g(N)}d\lambda.
\]
Second, for $q>2,$replacing $|f|$ by ${\displaystyle \vert f\vert^{\frac{q}{2}},}$we
get:
\begin{equation}
\int\frac{g'\left(N\right)}{N^{2}}\vert f\vert^{q}d\mu\leq C\int\left|\triangledown\vert f\vert^{\frac{q}{2}}\right|^{2}d\mu+D\int\vert f\vert^{q}d\mu.\label{eq:21-1}
\end{equation}
Calculating,
\[
\int\left|\triangledown\vert f\vert^{\frac{q}{2}}\right|^{2}d\mu=\int\left|\frac{q}{2}\vert f\vert^{\frac{q-2}{2}}\left(sgn\left(f\right)\right)\triangledown f\right|^{2}d\mu
\]
\[
\leq\int\frac{q^{2}}{4}\vert f\vert^{q-2}\vert\triangledown f\vert^{2}d\mu.
\]
Using Hölder\textquoteright s inequality, 
\[
\int\left|\triangledown\vert f\vert^{\frac{q}{2}}\right|^{2}d\mu\leq\int\frac{q^{2}}{4}\vert f\vert^{q-2}\vert\triangledown f\vert^{2}d\mu~\leq\frac{q^{2}}{4}\left(\int\vert f\vert^{q}d\mu\right)^{\frac{q-2}{q}}\left(\int\vert\triangledown f\vert^{q}d\mu\right)^{\frac{2}{q}}
\]
\begin{equation}
\leq\frac{q\left(q-2\right)}{4}\int\vert f\vert^{q}d\mu+\frac{q}{2}\int\vert\triangledown f\vert^{q}d\mu.\label{eq:22-1}
\end{equation}
Where the last inequality uses $ab{\displaystyle \leq\frac{a^{p^{'}}}{p^{'}}+\frac{b^{q'}}{q'},}$
with ${\displaystyle a=\left(\int\vert f\vert^{q}d\mu\right)^{\frac{q-2}{q}},}$
${\displaystyle b=\left(\int\vert\triangledown f\vert^{q}d\mu\right)^{\frac{2}{q}},}$
and $p'$ and $q'$ are conjugates. Choosing ${\displaystyle p^{'}=\frac{q}{q-2}},$
we obtain ${\displaystyle \frac{1}{q'}=1-\frac{q-2}{q}}$ , so ${\displaystyle q^{'}=\frac{q}{2}}.$
Using the inequalities (\ref{eq:21-1}) and (\ref{eq:22-1}), we get,
\[
\int\frac{g'\left(N\right)}{N^{2}}\vert f\vert^{q}d\mu\leq C\int\left|\triangledown\vert f\vert^{\frac{q}{2}}\right|^{2}d\mu+D\int\vert f\vert^{q}d\mu
\]
\[
\leq C'\int\vert\triangledown f\vert^{q}d\mu+D'\int\vert f\vert^{q}d\mu.
\]
\end{proof}

\section{$q-$Poincaré and $\beta-$Logarithmic Sobolev Inequalities\protect 
}We now have the U-Bound (\ref{eq:u}) at our disposal and are ready
to prove the q-Poincaré inequality using the U-Bound method of \cite{key-33}:

Let $\lambda$ be a measure satisfying the q-Poincaré inequality for
every ball ${\displaystyle B_{R}=\{x:N\left(x\right)<R\},~}$ i.e.
there exists a constant $C_{R}\in\left(0,\infty\right)$ such that
\[
\frac{1}{\vert B_{R}\vert}\int_{B_{R}}\left|f-\frac{1}{\vert B_{R}\vert}\int_{B_{R}}f\right|^{q}d\lambda\leq C_{R}\frac{1}{\vert B_{R}\vert}\int_{B_{R}}\vert\triangledown f\vert^{q}d\lambda,
\]

where $1\leq q<\infty.$ Note that we have this Poincaré inequality
on balls in the setting of Nilpotent lie groups thanks to J. Jerison's
celebrated paper \cite{key-1}. Later on, we apply the following result
of
\begin{thm}[Hebisch, Zegarli\'{n}ski \cite{key-33}]
 Let $\mu$ be a probability measure on $\mathbb{R}^{m}$ which is
absolutely continuous with respect to the measure $\lambda$ and such
that
\[
\int f^{q}\eta d\mu\leq C\int\vert\triangledown f\vert^{q}d\mu+D\int f^{q}d\mu
\]
with some non-negative function $\eta$ and some constants $C,D\in\left(0,\infty\right)$
independent of a function $f.$ If for any $L\in\left(0,\infty\right)$
there is a constant $A_{L}$ such that ${\displaystyle \frac{1}{A_{L}}\leq\frac{d\mu}{d\lambda}\leq A_{L}}$
on the set $\left\{ \eta<L\right\} $ and, for some $R\in\left(0,\infty\right)$
(depending on L), we have $\left\{ \eta<L\right\} \subset B_{R},$
then $\mu$ satisfies the q-Poincaré inequality
\[
\mu\vert f-\mu f\vert^{q}\leq c\mu\vert\triangledown f\vert^{q}
\]
with some constant $c\in\left(0,\infty\right)$ independent of $f.$
\end{thm}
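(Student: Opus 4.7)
My plan is to combine the U-bound hypothesis with Jerison's local $q$-Poincar\'e inequality on balls through a tail-vs-core decomposition based on level sets of $\eta$, using the density comparability on $\{\eta<L\}$ as the bridge between $\mu$ and $\lambda$. I would first reduce to bounding $\int |f-a|^q \, d\mu$ for a convenient constant $a$, because $\mu|f - \mu f|^q \leq 2^{q-1} \int |f-a|^q \, d\mu$ for any $a$. The best choice is $a := \frac{1}{\lambda(B_R)} \int_{B_R} f \, d\lambda$, precisely the reference constant appearing in Jerison's inequality on $B_R$.

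Applying the hypothesis to $f-a$ (noting $\triangledown (f-a) = \triangledown f$) gives $\int|f-a|^q \eta \, d\mu \leq C \int |\triangledown f|^q\, d\mu + D \int |f-a|^q \, d\mu$. I would then split $\int|f-a|^q\,d\mu$ along $\{\eta \geq L\}$ versus $\{\eta < L\}$ and use $1 \leq \eta/L$ on the large-$\eta$ part to obtain
\[
\int_{\{\eta \geq L\}} |f-a|^q \, d\mu \leq \frac{C}{L} \int |\triangledown f|^q \, d\mu + \frac{D}{L}\int |f-a|^q \, d\mu.
\]
Choosing $L > 2D$ and absorbing yields
\[
\int |f-a|^q \, d\mu \leq 2 \int_{\{\eta<L\}} |f-a|^q \, d\mu + \frac{2C}{L}\int |\triangledown f|^q\, d\mu.
\]
For the remaining core piece, density comparability on $\{\eta<L\}\subset B_R$ and Jerison's Poincar\'e inequality give $\int_{\{\eta<L\}}|f-a|^q\,d\mu \leq A_L \int_{B_R}|f-a|^q\,d\lambda \leq A_L C_R \int_{B_R} |\triangledown f|^q \, d\lambda$.

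The main obstacle I anticipate is the last step: converting the $\lambda$-gradient norm over $B_R$ back to a $\mu$-gradient norm, since the density comparability is only asserted on $\{\eta<L\}$, possibly a proper subset of $B_R$. My remedy is to exploit that in the intended applications $\eta$ is continuous and $B_R$ is relatively compact, so $\eta|_{B_R}$ is bounded by some $L' \geq L$; thus $B_R \subset \{\eta<L'\}$ and $d\lambda \leq A_{L'}\, d\mu$ on all of $B_R$, giving $\int_{B_R}|\triangledown f|^q\,d\lambda \leq A_{L'} \int |\triangledown f|^q\, d\mu$. Combining these estimates produces the desired $q$-Poincar\'e inequality with a constant of the form $c = 2^{q-1}(2 A_L A_{L'} C_R + 2C/L)$, the extra factor coming from the final passage from a general centering $a$ back to $\mu f$.
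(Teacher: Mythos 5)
You should note first that the paper itself contains no proof of this statement: Theorem 4 is quoted verbatim from Hebisch--Zegarli\'{n}ski \cite{key-33} and is used as a black box, so there is no in-paper argument to compare against. Your proposal reconstructs what is essentially the standard proof from that reference: reduce $\mu|f-\mu f|^{q}$ to $\mu|f-a|^{q}$ with $a$ the $\lambda$-average of $f$ over $B_{R}$, apply the U-bound to $|f-a|$, split along $\{\eta\geq L\}$ versus $\{\eta<L\}$, absorb the $D/L$-term by taking $L>2D$, and treat the core piece by the density comparability on $\{\eta<L\}\subset B_{R}$ together with Jerison's $q$-Poincar\'e inequality on $B_{R}$. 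All of these steps are sound (modulo the immaterial constant $2^{q}$ versus $2^{q-1}$, the fact that $|\triangledown|f-a||\leq|\triangledown f|$ a.e.\ for the sub-gradient, and the usual caveat that the absorption step presupposes $\mu|f-a|^{q}<\infty$, handled by assuming the right-hand side finite or by truncation).

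The weak point is exactly the one you flagged, and your remedy does not resolve it correctly. To convert $\int_{B_{R}}|\triangledown f|^{q}d\lambda$ back into a $\mu$-integral you propose that $\eta$ be bounded on $B_{R}$, so that $B_{R}\subset\{\eta<L'\}$ and the comparability hypothesis applies with $L'$. This is an assumption not contained in the statement, and it actually fails for some of the measures this theorem is applied to later in the paper: for instance in Corollary 5 with small $k$ one has $\eta=kN^{k-3}\sinh(N^{k})\rightarrow\infty$ as $N\rightarrow0$, while $B_{R}=\{N<R\}$ contains the origin, so no sublevel set of $\eta$ contains $B_{R}$. What the final step really requires --- and what holds in every intended application, since $d\mu/d\lambda=e^{-g(N)}/Z$ with $g\circ N$ continuous is bounded above and below on the bounded set $B_{R}$ --- is two-sided control of the density on the ball $B_{R}$ itself, not boundedness of $\eta$ there; this is how the hypothesis is meant in \cite{key-33}, where the stated condition on $\{\eta<L\}$ is an abbreviation for measures with locally bounded log-density. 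So your argument closes once the comparability is invoked on $B_{R}$ rather than manufactured via a larger sublevel set of $\eta$; as written, the last step rests on a condition that is neither hypothesized nor true in the paper's own examples.
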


The role of $\eta$ in Theorem 4 is played by ${\displaystyle \frac{g'(N)}{N^{2}}}$
from the U-Bound of Theorem 3. Hence, we get the following corollaries: 
\begin{cor}
The Poincaré inequality for $q\geq2$ holds for the measure ${\displaystyle d\mu=\frac{exp\left(-cosh\left(N^{k}\right)\right)}{Z}d\lambda}$,
where $\lambda$ is the Lebesgue measure, and k$\geq1$ in the setting
of the anisotropic Heisenberg group $\mathbb{R}^{2n+1}$ with $n>5$. 
\end{cor}

\begin{proof}
$g\left(N\right)=cosh\left(N^{k}\right),$ so $g'\left(N\right)=kN^{k-1}sinh\left(N^{k}\right),$
and\\
 ${\displaystyle g''(N)=k(k-1)N^{k-2}sinh(N^{k})+k^{2}N^{2k-2}cosh(N^{k})}.$
First ${\displaystyle g''(N)\leq k^{2}N^{2k-2}sinh^{2}(N^{k})=g'(N)^{2}},$
on the set $\{N>{\displaystyle \frac{3}{2}\},}$so the condition of
Theorem 2 is satisfied. Secondly,
\[
\int\frac{g'\left(N\right)}{N^{2}}f^{q}d\mu=\int f^{q}\left[kN^{k-3}sinh\left(N^{k}\right)\right]d\mu
\]
\[
=\int_{\left\{ N<\frac{3}{2}\right\} }f^{q}\left[kN^{k-3}\frac{e^{N^{k}}-e^{-N^{k}}}{2}\right]d\mu+\int_{\left\{ N\geq\frac{3}{2}\right\} }f^{q}\left[kN^{k-3}\frac{e^{N^{k}}-e^{-N^{k}}}{2}\right]d\mu
\]
\[
\leq\int_{\left\{ N<\frac{3}{2}\right\} }f^{q}\left[\left(\frac{3}{2}\right)^{k-3}k\frac{e^{\left(\frac{3}{2}\right)^{k}}-e^{-\left(\frac{3}{2}\right)^{k}}}{2}\right]d\mu+C^{'}\int_{\left\{ N\geq1\right\} }\vert\triangledown f\vert^{q}d\mu+D^{'}\int_{\left\{ N\geq1\right\} }\vert f\vert^{q}d\mu
\]
\[
\leq C\int\vert\triangledown f\vert^{q}d\mu+D\int\vert f\vert^{q}d\mu.
\]
Thus, the conditions of Theorem 4 are satisfied for $\eta=kN^{k-3}sinh\left(N^{k}\right),$
and $k\geq1.$ So, the Poincaré inequality holds for $q\geq2.$
\end{proof}
\begin{cor}
The Poincaré inequality for $q\geq1$ holds for the measure ${\displaystyle d\mu=\frac{exp\left(-N^{k}\right)}{Z}d\lambda,}$
where $\lambda$ is the Lebesgue measure, and $k\geq4$ in the setting
of the anisotropic Heisenberg group $\mathbb{R}^{2n+1}$ with $n>5$. 
\end{cor}

\begin{proof}
Let $g(N)=N^{k},$so $g^{'}\left(N\right)=kN^{k-1},$ and $g''(N)=k(k-1)N^{k-2}.$
First ${\displaystyle g''(N)\leq k^{2}N^{2k-2}=g'(N)^{2},}$ so the
condition of Theorem 2 is satisfied. Second, 
\[
\int\frac{g'\left(N\right)}{N^{2}}f^{q}d\mu=\int f^{q}\left[kN^{k-3}\right]d\mu=\int_{\left\{ N<1\right\} }f^{q}\left[kN^{k-3}\right]d\mu+\int_{\left\{ N\geq1\right\} }f^{q}\left[kN^{k-3}\right]d\mu
\]
\[
\leq\int_{\left\{ N<1\right\} }kf^{q}d\mu+C^{'}\int_{\left\{ N\geq1\right\} }\vert\triangledown f\vert^{q}d\mu+D^{'}\int_{\left\{ N\geq1\right\} }\vert f\vert^{q}d\mu
\]
\[
\leq C\int\vert\triangledown f\vert^{q}d\mu+D\int\vert f\vert^{q}d\mu.
\]
Thus, the conditions of Theorem 4 are satisfied for $\eta=kN^{k-3},$
and $k\geq4.$ So, the Poincaré inequality holds for $q\ensuremath{\geq2.}$
\end{proof}
The following corollary improves Corollary 6 in an interesting way.
Namely, at a cost of a logarithmic factor, we now get the Poincaré
inequality for polynomial growth of order $k\geq3.$ 
\begin{cor}
The Poincaré inequality for $q\geq2$ holds for the measure ${\displaystyle d\mu=\frac{exp\left(-N^{k}log\left(N+1\right)\right)}{Z}d\lambda},$
where $\lambda$ is the Lebesgue measure, and $k\geq3$ in the setting
of the anisotropic Heisenberg group $\mathbb{R}^{2n+1}$ with $n>5$. 
\end{cor}

\begin{proof}
Let $g(N)=N^{k}log\left(N+1\right),$ so ${\displaystyle g^{'}\left(N\right)=kN^{k-1}log(N+1)+\frac{N^{k}}{N+1},}$
and
\[
{\displaystyle g''(N)=k(k-1)N^{k-2}log(N+1)+\frac{2kN^{k-1}}{N+1}-\frac{N^{k}}{(N+1)^{2}}.}
\]
First ${\displaystyle g''(N)\leq k^{2}N^{2k-2}(log(N+1))^{2}+\frac{N^{2k}}{(N+1)^{2}}+2kN^{k-1}log(N+1)\frac{N^{k-1}}{N+1}=g'(N)^{2},}$
so the condition of Theorem 2 is satisfied. Secondly,
\[
\int\frac{g'\left(N\right)}{N^{2}}f^{q}d\mu=\int f^{q}\left[kN^{k-3}log(N+1)+\frac{N^{k-2}}{N+1}\right]d\mu
\]
\[
=\int_{\left\{ N<1\right\} }f^{q}\left[kN^{k-3}log(N+1)+\frac{N^{k-2}}{N+1}\right]d\mu+\int_{\left\{ N\geq1\right\} }f^{q}\left[kN^{k-3}log(N+1)+\frac{N^{k-2}}{N+1}\right]d\mu
\]
\[
\leq\int_{\left\{ N<1\right\} }\left(klog(2)+1\right)f^{q}d\mu+C^{'}\int_{\left\{ N\geq1\right\} }\vert\triangledown f\vert^{q}d\mu+D^{'}\int_{\left\{ N\geq1\right\} }\vert f\vert^{q}d\mu
\]
\[
\leq C\int\vert\triangledown f\vert^{q}d\mu+D\int\vert f\vert^{q}d\mu.
\]
Thus, the conditions of Theorem 4 are satisfied for ${\displaystyle \eta=kN^{k-3}log(N+1)+\frac{N^{k-2}}{N+1},}$
and $k\geq3.$ So, the Poincaré inequality holds for $q\geq1.$ 
\end{proof}
To get the $\beta-$Logarithmic Sobolev inequality, we use the following
theorem by the authors of this paper in \cite{key-36} (which generalises
J.Inglis et al.'s Theorem 2.1 \cite{key-2}).
\begin{thm}
Let U be a locally lipschitz function on $\mathbb{R}^{N}$which is
bounded below such that $Z=\int e^{-U}d\lambda<\infty,$ and ${\displaystyle d\mu=\frac{e^{-U}}{Z}d\lambda.}$
Let $\phi:[0,\infty)\rightarrow\mathbb{R}^{+}$ be a non-negative,
non-decreasing, concave function such that $\phi(0)>0,$ and $\phi'(0)>0.$
Assume the following classical Sobolev inequality is satisfied:
\[
\left(\int|f|^{q+\epsilon}d\lambda\right)^{\frac{q}{q+\epsilon}}\leq a\int|\triangledown f|^{q}d\lambda+b\int|f|^{q}d\lambda
\]
 for some $a,$ $b\in[0,\infty),$ and $\epsilon>0.$ Moreover, if
for some $A,$ $B\in[0,\infty),$ we have:
\[
\mu\left(|f|^{q}(\phi(U)+|\triangledown U|^{q})\right)\leq A\mu|\triangledown f|^{q}+B\mu|f|^{q},
\]
Then, there exists constants $C,$ $D\in[0,\infty)$ such that:
\[
\mu\left(|f|^{q}\phi\left(\left|log\frac{|f|^{q}}{\mu|f|^{q}}\right|\right)\right)\leq C\mu|\triangledown f|^{q}+D\mu|f|^{q},
\]
 for all locally Lipschitz functions $f.$ 
\end{thm}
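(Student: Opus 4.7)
The plan is to reduce the $\phi$-Logarithmic Sobolev inequality to the given augmented U-bound via a decomposition of $\log|f|^q$ that isolates the density of $\mu$. First I would normalize by homogeneity so that $\mu|f|^q = 1$, work with $|f|$ in place of $f$, and reduce to suitably smooth $f$ by density. Writing $g = |f|^q$ and $h = g\,e^{-U}/Z$, so that $h\,d\lambda$ is a probability measure on $\mathbb{R}^N$, one has the identity $\log g = \log h + U + \log Z$, and hence
\[
|\log g| \leq |\log h| + U + |\log Z|.
\]

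Since $\phi$ is concave with $\phi(0) \geq 0$, it is subadditive on $[0,\infty)$ (the three-point concavity inequality gives $\phi(a)+\phi(b) \geq \phi(a+b)+\phi(0)$), which yields
\[
\phi(|\log g|) \leq \phi(|\log h|) + \phi(U) + \phi(|\log Z|).
\]
Multiplying by $g$ and integrating against $\mu$ (and using $\mu g = 1$) gives
\[
\mu\bigl(g\,\phi(|\log g|)\bigr) \leq \mu\bigl(g\,\phi(|\log h|)\bigr) + \mu\bigl(g\,\phi(U)\bigr) + \phi(|\log Z|).
\]
The middle term on the right is absorbed immediately by the standing hypothesis, since dropping the nonnegative $|\triangledown U|^q$ summand yields $\mu(g\,\phi(U)) \leq A\,\mu|\triangledown f|^q + B\,\mu|f|^q$, and the trailing constant is packaged into $D\,\mu|f|^q$ after unscaling.

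The remaining work is to control $\mu(g\,\phi(|\log h|)) = \int h\,\phi(|\log h|)\,d\lambda$, a purely Lebesgue quantity about the probability density $h$. I would treat this by a level-set / Moser-type iteration: split the integral over dyadic-style slices $\{e^k \leq h < e^{k+1}\}$ for $k \in \mathbb{Z}$ and apply the hypothesized classical Sobolev inequality (using the integrability gain $\epsilon$) to suitable truncations of $h^{1/q}$ on each slice, producing a geometric bound that is then summed in $k$; concavity of $\phi$ provides the required sublinear growth to make the series converge. The key computation that couples the two halves of the argument is
\[
|\triangledown h^{1/q}|^q = \frac{e^{-U}}{Z}\left|\triangledown f - \frac{f}{q}\,\triangledown U\right|^q \leq 2^{q-1}\,\frac{e^{-U}}{Z}\Bigl(|\triangledown f|^q + q^{-q} f^q |\triangledown U|^q\Bigr),
\]
so integrating against $\lambda$ produces $\mu|\triangledown f|^q$ together with a term $\mu(|f|^q|\triangledown U|^q)$ that is precisely what the remaining half of the hypothesized augmented U-bound is designed to absorb.

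The main obstacle is the Moser-style iteration on the level sets: bookkeeping the cutoff derivatives produced by the truncations of $h^{1/q}$, ensuring the boundary contributions supported on $\{h = e^k\}$ telescope across $k$, and tracking how the concave weight $\phi$ interacts with the geometric Sobolev gain $\epsilon$ to keep the series convergent, is the one genuinely delicate step. Everything else is routine packaging via subadditivity of $\phi$, the elementary inequality $(a+b)^q \leq 2^{q-1}(a^q+b^q)$, and applying the two standing hypotheses; combining the three estimates above and unscaling recovers the general inequality with explicit constants $C,D$.
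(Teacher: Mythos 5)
First, note that the paper does not prove this theorem at all: it is imported from the authors' earlier work \cite{key-36}, where it is obtained as a generalisation of Theorem 2.1 of Inglis--Kontis--Zegarli\'nski \cite{key-2}. Measured against that known proof, your preparatory steps are exactly the standard skeleton: normalising $\mu|f|^{q}=1$, writing $\log|f|^{q}=\log h+U+\log Z$ with $h=|f|^{q}e^{-U}/Z$, invoking subadditivity of the concave $\phi$, absorbing $\mu\bigl(|f|^{q}\phi(U)\bigr)$ by the hypothesis, and expanding $\nabla\bigl(fe^{-U/q}\bigr)$ so that the resulting $\mu\bigl(|f|^{q}|\nabla U|^{q}\bigr)$ is absorbed by the other half of the U-bound. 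The gap is precisely the step you yourself flag as ``genuinely delicate'', and it is not bookkeeping: it is the analytic heart of the theorem, and the route you sketch for it has a real obstruction.

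Concretely: (i) by keeping the full absolute value $|\log h|$ on the Lebesgue side you must control $\int_{\{h<1\}}h\,\phi(\log(1/h))\,d\lambda$ over an infinite-measure space. Dyadic slicing gives $\sum_{k\le -1}\phi(|k|)\,m_{k}$ with only $\sum_{k}m_{k}=1$ known, and this series can diverge (take $\phi$ close to linear and $m_{k}\sim|k|^{-2}$); moreover the classical Sobolev inequality is an \emph{upper} bound on $\|h^{1/q}\|_{q+\epsilon}$ and gives no leverage against mass of $h$ spread thinly at low levels, since such spreading only makes that norm smaller. The proof in \cite{key-36,key-2} never sends the negative-log region to the $\lambda$ side: it bounds $\mu\bigl(|f|^{q}\phi(|\log|f|^{q}|)\mathbf{1}_{\{|f|^{q}<\mu|f|^{q}\}}\bigr)$ directly with respect to the probability measure $\mu$, using $\phi(t)\le\phi(0)+\phi'(0)t$ together with $s|\log s|\le c\,s^{1/2}$ for $0<s\le1$ and Jensen, so no gradient or Sobolev input is needed there. (ii) For the positive part no Moser-type iteration is needed or used: Jensen's inequality for the concave logarithm with respect to the probability measure $h\,d\lambda$ (equivalently, H\"older interpolation between $L^{q}(\lambda)$ and $L^{q+\epsilon}(\lambda)$) yields $\int h\log_{+}h\,d\lambda\le\frac{q+\epsilon}{\epsilon}\log\bigl(\|h^{1/q}\|_{q+\epsilon}^{q}/\|h^{1/q}\|_{q}^{q}\bigr)$ up to constants, and then $\log x\le x$ plus the assumed Sobolev inequality applied to $u=fe^{-U/q}/Z^{1/q}$ produce $a\int|\nabla u|^{q}d\lambda+b\int u^{q}d\lambda$, whose cross term is absorbed exactly as you describe. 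Until you either carry out your level-set iteration (resolving the divergence in (i), which I do not believe can be done from the stated hypotheses alone) or replace it by this Jensen argument and move the negative-log region back to the $\mu$ side, the proposal is incomplete at its central step.
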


Theorem 10 of \cite{key-36} proves that for ${\displaystyle d\mu=\frac{e^{-\alpha N^{p}}}{Z}d\lambda,}$
where $\alpha>0,\;p\geq1,$ $N$ is a smooth homogeneous norm, and
$Z$ is the normalization constant, the measure $\mu$ satisfies no
$\beta$-logarithmic Sobolev inequality $(0<\beta\leq1)$ for $1<q<{\displaystyle \frac{2p\beta}{p-1}}.$
However for $p\geq4,$ ${\displaystyle 0<\beta\leq\frac{p-3}{p},}$
and $q\geq2,$ we will show that $\mu$ satisfies $\beta$-logarithmic
Sobolev inequality.

We will use the following theorem: 
\begin{thm}
Let $\mathbb{R}^{2n+1}$ be an anisotropic Heisenberg group with $n>5.$
Let the probability measure be ${\displaystyle d\mu=\frac{e^{-g(N)}}{Z}d\lambda,}$
where $Z$ is the normalization constant and $N^{-2n}$ the fundamental
solution. Let $g:\left[0,\infty\right)\rightarrow\left[0,\infty\right)$
be a differentiable increasing function such that $g'(N)$ is increasing,
${\displaystyle g(N)\leq\left(c\frac{g'(N)}{N^{2}}\right)^{\frac{1}{\beta}}},$
and $g''(N)<dg'(N)^{2}$ on $\{N\geq1\},$ for some constants $c$
and $d.$ Then 
\[
\mu\left(|f|^{q}\left|log\left(\frac{|f|^{q}}{\mu|f|^{q}}\right)\right|^{\beta}\right)\leq C\mu|f|^{q}+D\mu|\triangledown f|^{q},
\]

for $C$ and $D$ positive constants and for $q\geq2.$
\end{thm}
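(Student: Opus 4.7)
My plan is to apply Theorem 9 with $U = g(N)$ and the smoothed weight $\phi(s) = (1+s)^\beta$, which is non-negative, non-decreasing, and concave for $0 < \beta \leq 1$, with $\phi(0), \phi'(0) > 0$. Since $\phi(s) \sim s^\beta$ for large $s$ while $\phi(s) \geq 1$ everywhere, the conclusion of Theorem 9 with this $\phi$ will imply the desired $\beta$-logarithmic Sobolev inequality after absorbing the additive unit into $\mu|f|^q$. It therefore remains to verify the two hypotheses of Theorem 9: the classical Euclidean Sobolev inequality on $\mathbb{R}^{2n+1}$, which is immediate for $q \geq 2$ with $q < 2n+1$ (and hence certainly for $n > 5$), and the key estimate
\[
\mu\bigl(|f|^q(\phi(g(N)) + |\triangledown g(N)|^q)\bigr) \leq A\,\mu|\triangledown f|^q + B\,\mu|f|^q.
\]

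\textbf{The $\phi(g(N))$ term.} The growth hypothesis $g(N) \leq (c\,g'(N)/N^2)^{1/\beta}$ yields $g(N)^\beta \leq c\,g'(N)/N^2$ on $\{N \geq 1\}$, while on $\{N < 1\}$ the continuous function $\phi(g(N))$ is uniformly bounded. Hence $\phi(g(N)) \leq c\,g'(N)/N^2 + C_0$ globally, and integrating against $|f|^q\,d\mu$ reduces this piece directly to the U-bound of Theorem 3.

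\textbf{The $|\triangledown g(N)|^q$ term.} Writing $|\triangledown g(N)|^q = g'(N)^q |\triangledown N|^q$, the upper bound $|\triangledown N|^2 \leq \frac{(2n+1)^2 |x|^2}{2^{3} n^2 N^2}$ from Lemma 2, together with the homogeneous comparability $|x| \leq C_n N$ (a consequence of the first-layer scaling), gives $|\triangledown g(N)|^q \leq C\,g'(N)^q$. To estimate $\int |f|^q g'(N)^q\,d\mu$ I would integrate by parts the vector field $V = g'(N)^{q-1} \triangledown N \, e^{-g(N)}$ and expand its divergence using the fundamental-solution identity $\Delta N = (Q-1)|\triangledown N|^2/N$ (which follows from $\Delta N^{2-Q} = \delta$). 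The resulting identity expresses $\int |f|^q g'(N)^q |\triangledown N|^2\,d\mu$ as the sum of (a) a cross-term $\int \triangledown(|f|^q)\cdot g'(N)^{q-1}\triangledown N\,d\mu$ handled by Young's inequality (yielding $\varepsilon\,\mu(|f|^q g'(N)^q) + C_\varepsilon\,\mu|\triangledown f|^q$), (b) a $g''$-term of size $(q-1)\int |f|^q g'(N)^{q-2} g''(N) |\triangledown N|^2\,d\mu$, and (c) a lower-order term $(Q-1)\int |f|^q g'(N)^{q-1}|\triangledown N|^2/N\,d\mu$ to be controlled by iterating the same IBP or by the U-bound after a further Young step.

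\textbf{Main obstacle.} The critical difficulty is the $g''$-term in (b): the assumption $g''(N) \leq d\,g'(N)^2$ reintroduces a multiple $(q-1)d$ of the very quantity on the left-hand side. Absorbing it requires either restricting the IBP to $\{N \geq R\}$ with $R$ large enough that the effective ratio $\sup_{N \geq R} g''(N)/g'(N)^2$, combined with the Young's-inequality loss, is strictly less than $1$, then handling the complementary region $\{N < R\}$ (compact in $N$ but unbounded in $(x,t)$) via the U-bound together with local Euclidean Poincar\'e-type estimates, or choosing the IBP weight not as $g'(N)^{q-1}$ but as a more carefully designed primitive so that the $g''$-correction has the correct sign. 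The coercive factor on the LHS is supplied by the sharp lower bound $|\triangledown N|^2 \geq |x|^2/(2^{5+2/n} N^2)$ of Lemma 2, allowing $\mu(|f|^q g'(N)^q)$ to be extracted after a Hardy-type estimate analogous to the one appearing in the proof of Theorem 3. This absorption is the crux of the argument and parallels the strategies of \cite{key-34,key-36,key-2} in related Carnot-group settings.
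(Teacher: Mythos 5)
Your overall frame coincides with the paper's: apply the general theorem of \cite{key-36} with $\phi(s)=(1+s)^{\beta}$, handle $\phi(g(N))$ exactly as you do (split at $\{N\ge 1\}$, use $g\le (c\,g'(N)/N^{2})^{1/\beta}$ and the U-bound of Theorem 3), and treat $|\triangledown g(N)|^{q}$ by integration by parts against a vector field built from $g'(N)^{q-1}\triangledown N$ with the identity $\Delta N=(Q-1)|\triangledown N|^{2}/N$. The genuine gap is in your reduction for the $|\triangledown g(N)|^{q}$ term. By discarding the factor $|\triangledown N|^{q}\asymp(|x|/N)^{q}$ and aiming at $\mu\bigl(|f|^{q}g'(N)^{q}\bigr)$, you set yourself a strictly harder target than the hypothesis of the quoted theorem requires: the horizontal gradient of $N$ vanishes on the $t$-axis (each $\partial_{x_{j}}N$ is proportional to $x_{j}$, and the $\partial_{t}N$ contributions inside $X_{j}N$ carry factors of the $x$-coordinates), so your IBP with $V=g'(N)^{q-1}\triangledown N\,e^{-g(N)}$ only returns the degenerate quantity $\mu\bigl(|f|^{q}g'(N)^{q}|\triangledown N|^{2}\bigr)\asymp\mu\bigl(|f|^{q}g'(N)^{q}|x|^{2}/N^{2}\bigr)$; the lower bound of Lemma 2 cannot ``extract'' $\mu(|f|^{q}g'(N)^{q})$ from this, and the Hardy/coarea machinery of Theorem 3, which you invoke to bridge the region where $|x|/N$ is small, produces the weight $g'(N)/N^{2}$, not $g'(N)^{q}$ — for $q>2$ and rapidly growing $g$ the unweighted bound is not attainable by these methods (and is not needed). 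The paper avoids the issue entirely by keeping the degenerate weight in the vector field, taking $V=\triangledown N\,\frac{|x|^{q-2}}{N^{q-2}}g'(N)^{q-1}$, so that by Lemma 2 one has $\triangledown g(N)\cdot V\ge 2^{-(5+2/n)}g'(N)^{q}|x|^{q}/N^{q}$, a quantity two-sidedly comparable to $|\triangledown g(N)|^{q}$, which is exactly what the hypothesis asks for; the neighbourhood of the $t$-axis then never has to be treated.

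Concerning what you call the crux, the $g''$-term: with the paper's weighted $V$ it appears as $(q-1)g'(N)^{q-2}g''(N)|x|^{q-2}|\triangledown N|^{2}/N^{q-2}\le C_{n}(q-1)d\,g'(N)^{q}|x|^{q}/N^{q}$, i.e.\ with the same weight as $\triangledown g(N)\cdot V$, and it is absorbed into that term together with the Young term $\frac{\epsilon}{p}\int|f|^{q}|V|^{p}d\mu$, the remaining divergence terms being of lower order on $\{N\ge1\}$; your suggestion of working on $\{N\ge R\}$ where $g''/g'^{2}$ is small is in the right spirit (and is what makes the application $g=\alpha N^{p}$ work), but your proposal neither carries out this absorption nor the treatment of the complementary region. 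As written, the argument for the $|\triangledown g(N)|^{q}$ estimate therefore does not close, and this is precisely the step where the paper's choice of vector field differs from yours.
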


The proof follows closely that of Theorem 11 in \cite{key-36}, with
some modification.
\begin{proof}
Choose $\phi(x)=(1+x)^{\beta},$ which satisfies the conditions of
Theorem 7. On $\{N\geq1\},$ $g''(N)<dg'(N)^{2}\leq g'(N)^{3}N^{3},$
so the condition of Theorem 3 is satisfied. Thus, on $\{N\geq1\},$
we have the U-bound (\ref{eq:u}):
\[
\mu\left(\frac{g'(N)}{N^{2}}|f|^{q}\right)\leq C\mu|\triangledown f|^{q}+D\mu|f|^{q}.
\]
By the condition ${\displaystyle g(N)\leq\left(c\frac{g'(N)}{N^{2}}\right)^{\frac{1}{\beta}},}$
we obtain ${\displaystyle \phi(g(N))=(1+g(N))^{\beta}\leq\frac{g'(N)}{N^{2}}}$
on $\{N\geq1\}$. Hence, since $g(N)$ is increasing and using the
U-bound,
\begin{equation}
\begin{array}{cl}
{\displaystyle \mu\left(\phi(g(N))|f|^{q}\right)} & {\displaystyle \leq\int_{\{N\geq1\}}\left(\frac{g'(N)}{N^{2}}|f|^{q}\right)d\mu+\int_{\{N<1\}}\phi(g(N))|f|^{q}d\mu}\\
\\
 & \leq{\displaystyle \int_{\{N\geq1\}}\left(\frac{g'(N)}{N^{2}}|f|^{q}\right)d\mu+\int_{\{N<1\}}(1+g(1))^{\beta}|f|^{q}d\mu}\\
\\
 & {\displaystyle \leq C\mu|\triangledown f|^{q}+D\mu|f|^{q}.}
\end{array}\label{eq:19-1-1}
\end{equation}

In order to use Theorem 8, it remains to prove:
\begin{equation}
\mu\left(|f|^{q}|\triangledown g(N)|^{q}\right)\leq C\mu|f|^{q}+D\mu|\triangledown f|^{q}.\label{eq:20-1-1}
\end{equation}

On $\{N<1\},$ since $g'(N)$ is increasing and using (\ref{eq:24-2}),\\
 $\int_{\{N<1\}}\left(|f|^{q}|\triangledown g(N)|^{q}\right)d\mu=\int_{\{N<1\}}\left(|f|^{q}|g'(N)\triangledown N|^{q}\right)d\mu\leq{\displaystyle \frac{(2n+1)^{q}}{2^{\frac{3q}{2}}n^{q}}}\int_{\{N<1\}}|f|^{q}|g'(1)|^{q}d\mu.$
\\
We now need to consider $\{N\geq1\}:$
\begin{equation}
\int|f|^{q}\left(\triangledown g(N)\cdot V-\triangledown\cdot V\right)d\mu=\int\triangledown|f|^{q}\cdot Vd\mu\leq\frac{\epsilon}{p}\int|f|^{q}|V|^{p}d\mu+\frac{1}{\epsilon^{\frac{q}{p}}}q^{q-1}\int|\triangledown f|^{q}d\mu,\label{eq:21-1-1}
\end{equation}
where the last inequality uses $\text{\ensuremath{{\displaystyle ab \leq\epsilon\frac{a^{p}}{p} + \frac{b^{q}}{\epsilon^{\frac{q}{p}}q} ,}}}$
where $a=|f|^{q-1}|V|,$ and $b=q|\triangledown f|.$ Let ${\displaystyle V=\triangledown N\frac{|x|^{q-2}}{N^{q-2}}g'(N)^{q-1}.}$
Since $\triangledown g(N)=g'(N)\triangledown N,$ then $\triangledown g(N)\cdot V=|\triangledown g(N)|^{q},$
which is the term on the left hand side of (\ref{eq:20-1-1}). Using
the inequality (\ref{eq:24-2}) on the first term on the right hand
side of (\ref{eq:21-1-1}) we get
\[
\begin{array}{cl}
{\displaystyle \frac{\epsilon}{p}\int|f|^{q}|V|^{p}d\mu} & {\displaystyle =\frac{\epsilon}{p}\int|\triangledown N|^{p}|f|^{q}\frac{|x|^{(q-2)p}g'(N)^{q}}{N^{(q-2)p}}d\mu}\\
\\
 & \leq{\displaystyle \frac{\epsilon}{p}\frac{(2n+1)^{q}}{2^{\frac{3q}{2}}n^{q}}\int|f|^{q}\frac{|x|^{q}g'(N)^{q}}{N^{q}}d\mu}
\end{array}
\]
which can subtracted from the left hand side of (\ref{eq:21-1-1})
since by choosing $\epsilon$ small enough and noting that using (\ref{eq:20-2}),
\[
\triangledown g(N)\cdot V=|\triangledown N|^{2}\frac{|x|^{q-2}}{N^{q-2}}g'(N)^{q}\geq\frac{1}{2^{5+\frac{2}{n}}}\frac{|x|^{q}}{N^{q}}g'(N)^{q}.
\]

It remains to compute $\triangledown\cdot V.$ Using the fact that
$N^{-2n}$ is the fundamental solution, ${\displaystyle \triangle N=\frac{(Q-1)|\triangledown N|^{2}}{N},}$
$\;Q=2n+2$ the homogeneous dimension and ${\displaystyle |\triangledown N|^{2}\leq\frac{(2n+1)^{2}|x|^{2}}{2^{3}n^{2}N^{2}}}$
(\ref{eq:24-2}),
\[
\triangledown\cdot V=\Delta N\frac{|x|^{q-2}}{N^{q-2}}g'(N)^{q-1}+(q-2)\frac{|x|^{q-4}g'(N)^{q-1}x\cdot\triangledown N}{N^{q-2}}-(q-2)\frac{|\triangledown N|^{2}|x|^{q-2}g'(N)^{q-1}}{N^{q-1}}
\]
\[
+(q-1)g'(N)^{q-2}g''(N)\frac{|x|^{q-2}|\triangledown N|^{2}}{N^{q-2}}.
\]
\[
|\triangledown\cdot V|\leq\frac{(Q-1)(2n+1)^{2}}{2^{3}n^{2}}\frac{|x|^{q}}{N^{q+1}}g'(N)^{q-1}+(q-2)\frac{(2n+1)|x|^{q-2}g'(N)^{q-1}}{2^{\frac{3}{2}}nN^{q-1}}
\]
\[
+(q-2)\frac{(2n+1)^{2}}{2^{3}n^{2}}\frac{|x|^{q}g'(N)^{q-1}}{N^{q+1}}+(q-1)\frac{(2n+1)^{2}}{2^{3}n^{2}}g'(N)^{q-2}g''(N)\frac{|x|^{q}}{N^{q}}.
\]
All terms can be absorbed by the first term in (\ref{eq:21-1-1}).
Using (\ref{eq:19-1-1}) and (\ref{eq:20-1-1}), the condition of
Theorem 8 is satisfied, and we obtain $\beta-$logarithmic Sobolev
inequality:
\[
\mu\left(|f|^{q}\left|log\left(\frac{|f|^{q}}{\mu|f|^{q}}\right)\right|^{\beta}\right)\leq C\mu|f|^{q}+D\mu|\triangledown f|^{q}
\]
 for $C$ and $D$ positive constants independent of $f$. 
\end{proof}
As a corollary, we obtain the following result:
\begin{cor}
Let $\mathbb{R}^{2n+1}$ be an anisotropic Heisenberg group with $n>5$
and $N$ the homogeneous norm corresponding to the fundamental solution
of the sub-Laplacian equation. Let the probability measure be ${\displaystyle d\mu=\frac{e^{-\beta N^{p}}}{Z}d\lambda,}$
where $Z$ is the normalization constant. Then, for $p\geq4$ and
${\displaystyle 0<\beta\leq\frac{p-3}{p},}$

\[
\mu\left(|f|^{q}\left|log\left(\frac{|f|^{q}}{\mu|f|^{q}}\right)\right|^{\beta}\right)\leq C\mu|f|^{q}+D\mu|\triangledown f|^{q},
\]
 for $C$ and $D$ positive constants and for $q\geq2.$ 
\end{cor}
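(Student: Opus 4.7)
The plan is to deduce this corollary as a direct application of Theorem 9 (the preceding theorem) by choosing $g(N)=\beta N^{p}$ and verifying the three structural hypotheses that Theorem 9 demands: that $g$ is differentiable and increasing with $g'$ increasing, that $g''(N) < d\, g'(N)^{2}$ on $\{N\geq 1\}$ for some constant $d$, and that the crucial size condition $g(N)\leq \bigl(c\, g'(N)/N^{2}\bigr)^{1/\beta}$ holds on $\{N\geq 1\}$ for some constant $c$.

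First I would compute the elementary derivatives: with $g(N)=\beta N^{p}$ we have $g'(N)=p\beta N^{p-1}$ and $g''(N)=p(p-1)\beta N^{p-2}$. The positivity, monotonicity of $g$, and monotonicity of $g'$ are immediate for $p\geq 1$. The inequality $g''(N)\leq d\, g'(N)^{2}$ on $\{N\geq 1\}$ reduces to $p(p-1)\beta N^{p-2}\leq d\, p^{2}\beta^{2}N^{2p-2}$, i.e.\ to $(p-1)\leq d\, p\beta N^{p}$, which holds on $\{N\geq 1\}$ for any $d\geq (p-1)/(p\beta)$ since $p\geq 4$ and $\beta>0$.

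The key step is checking the growth condition $g(N)\leq \bigl(c\, g'(N)/N^{2}\bigr)^{1/\beta}$. Raising both sides to the power $\beta$, this becomes
\[
\beta^{\beta} N^{p\beta}\;\leq\; c\,p\beta\, N^{p-3}\qquad\text{for } N\geq 1.
\]
On $\{N\geq 1\}$ this holds, for some constant $c$ depending only on $p$ and $\beta$, exactly when the exponent on the left does not exceed the one on the right, i.e.\ $p\beta\leq p-3$, which is precisely the hypothesis $0<\beta\leq (p-3)/p$ assumed in the corollary; note also that $p\geq 4$ guarantees $p-3>0$ so the range of admissible $\beta$ is non-empty.

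With all three hypotheses verified, Theorem 9 applies verbatim and yields
\[
\mu\!\left(|f|^{q}\left|\log\!\left(\tfrac{|f|^{q}}{\mu|f|^{q}}\right)\right|^{\beta}\right)\;\leq\; C\,\mu|f|^{q}+D\,\mu|\triangledown f|^{q}
\]
for all $q\geq 2$, with constants $C,D\in(0,\infty)$ independent of $f$. The only thing to watch is the interplay of the three constants: the constant $d$ and the constant $c$ arising above depend only on $p,\beta$, so they can be absorbed into $C,D$ produced by Theorem 9. No genuine obstacle appears here; the work has already been done in Theorem 9, and the corollary is essentially a verification that the power weight $e^{-\beta N^{p}}$ falls into the admissible class described by the hypotheses of that theorem.
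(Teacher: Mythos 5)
Your proposal is correct and matches the paper's intent exactly: the corollary is presented as an immediate application of the preceding theorem with $g(N)=\beta N^{p}$, and your verification of the three hypotheses (monotonicity of $g'$, the bound $g''(N)\leq d\,g'(N)^{2}$ on $\{N\geq1\}$, and the growth condition $g(N)\leq(c\,g'(N)/N^{2})^{1/\beta}$, which reduces to $p\beta\leq p-3$) is precisely the computation the paper leaves implicit. No gaps.
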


\end{document}